\documentclass[oneside,a4paper,12pt]{amsart} 
\usepackage{amssymb}
\usepackage{amscd}
\usepackage[all,cmtip]{xy}
\usepackage{verbatim}
\usepackage{lscape}
\usepackage{enumerate}
\usepackage{xcolor}
\setlength{\vfuzz}{2mm}
\setlength{\hfuzz}{1mm}
\setlength{\textwidth}{165mm} 
\setlength{\textheight}{235mm}
\setlength{\oddsidemargin}{0pt}
\setlength{\evensidemargin}{0pt}
\setlength{\topmargin}{-5mm}
\theoremstyle{plain}
\newtheorem{theorem}{Theorem}[section]
\newtheorem{lemma}[theorem]{Lemma}

\newtheorem{corollary}[theorem]{Corollary}
\newtheorem{proposition}[theorem]{Proposition}

\theoremstyle{definition}

\newtheorem{definition}[theorem]{Definition}

\theoremstyle{remark}
\newtheorem*{remark}{Remark}

\newtheorem*{example}{Example}
 
\DeclareMathOperator{\Ker}{Ker}

 \DeclareMathOperator{\Hom}{Hom}

\DeclareMathOperator{\Ima}{Im} \DeclareMathOperator{\res}{res}

\DeclareMathOperator{\reg}{reg}

\DeclareMathOperator{\rank}{rank}

\DeclareMathOperator{\Gl}{Gl}

\begin{document}

\title{Rank, Coclass and Cohomology}

\author{Peter Symonds}
\thanks{Partially supported by an International Academic Fellowship from the Leverhulme Trust.}
\address{School of Mathematics\\
         University of Manchester\\
     Manchester M13 9PL\\
     United Kingdom}
\email{Peter.Symonds@manchester.ac.uk}

\subjclass[2010]{Primary: 20J06; Secondary: 20D15}

\begin{abstract}
We prove that for any prime $p$ the finite $p$-groups of fixed coclass have only finitely many different mod-$p$ cohomology rings between them. This was conjectured by Carlson; we prove it by first proving a stronger version for groups of fixed rank.
\end{abstract}

\maketitle

\section{Introduction}
\label{sec:intro}

Jon Carlson in \cite[{\S}1]{carlson} conjectured that the $p$-groups of given coclass have only finitely many isomorphism classes of mod-$p$ cohomology rings between them and he presented a proof for $p=2$. In this paper we prove Carlson's conjecture for all primes. In fact, we prove a stronger conjecture for $p$-groups of given rank, due to D\'{i}az, Garaialde and Gonz\'{a}lez \cite[5.2]{dgg2}. The rank of a $p$-group is the smallest number $r$ such that any subgroup can be generated by $r$ elements.

\begin{theorem}
	\label{th:main}
	For fixed $p$ and $r$, the $p$-groups of rank $r$ have only finitely many graded isomorphism classes of cohomology rings between them.
\end{theorem}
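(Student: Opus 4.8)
The plan is to reduce Theorem~\ref{th:main} to three numerical bounds on the graded-commutative $\F_p$-algebra $H^*(G;\F_p)$, holding uniformly over all $p$-groups $G$ of rank $r$: a bound on the number of algebra generators, a bound on their degrees, and a bound on the degrees in which a generating set of relations can be found. These suffice, because the free graded-commutative $\F_p$-algebra on a prescribed finite set of generators in prescribed degrees has finite-dimensional homogeneous components, so once the three bounds are fixed there are only finitely many homogeneous ideals of it generated in degrees below the bound, hence only finitely many graded-isomorphism classes of such quotient algebras. So the problem becomes one of producing these bounds as explicit functions of $p$ and $r$.

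The main tool for this is the regularity theorem $\reg H^*(G;\F_p)=0$ (Benson's regularity conjecture, now a theorem). Fix a homogeneous system of parameters $\zeta_1,\dots,\zeta_n$ for $H^*(G;\F_p)$ with $\deg\zeta_i=d_i$; then $n$ is the Krull dimension of $H^*(G;\F_p)$, which by Quillen is the $p$-rank of $G$ and so is at most $r$, and the $\zeta_i$ are algebraically independent, so $A:=\F_p[\zeta_1,\dots,\zeta_n]$ is a polynomial subalgebra. Since $\reg=0$, the module $H^*(G;\F_p)$ is generated over $A$ in degrees at most $\sum_i(d_i-1)$, and its syzygies over $A$ lie in degrees bounded in terms of the $d_i$ and $n$. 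Hence, \emph{provided} one can choose the h.s.o.p.\ with all $d_i\le D$ for some $D=D(p,r)$, the algebra $H^*(G;\F_p)$ is generated by the $\zeta_i$ together with the module generators --- all in degrees $\le rD$ --- and its relations lie in degrees bounded in terms of $D$ and $r$. What is still missing is a bound on the \emph{number} of algebra generators, i.e.\ on $\dim_{\F_p}H^j(G;\F_p)$ for $j\le rD$ (the case $\dim_{\F_p}H^1(G;\F_p)=d(G)\le r$ being automatic). Thus everything reduces to two uniform assertions for $p$-groups of rank $r$: \emph{(i)} an h.s.o.p.\ for $H^*(G;\F_p)$ in degrees $\le D(p,r)$; and \emph{(ii)} a bound on $\dim_{\F_p}H^j(G;\F_p)$ for $j$ below any prescribed value.

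I expect (i) to be the real difficulty, and it is precisely here that bounded \emph{rank}, not merely bounded $p$-rank, must be used: since $|G|$ is unbounded, the standard ways of building parameters --- Chern classes of a faithful representation, or Evens norms of classes restricted from the centre --- produce parameters of unbounded degree. The route I would take relies on the structure theory of $p$-groups of bounded rank, which supplies a powerful characteristic subgroup $P\trianglelefteq G$ with $[G:P]$ bounded in terms of $p$ and $r$. The cohomology of a powerful $p$-group of rank $\le r$ can be shown, using the tight internal structure of such groups, both to carry an h.s.o.p.\ in bounded degree and to satisfy $\dim_{\F_p}H^j(P;\F_p)\le f(p,r,j)$. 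One transports an h.s.o.p.\ of $H^*(P;\F_p)$ up to $G$ by applying the Evens norm from $P$ to $G$ (the norm of a class of degree $d$ has degree $[G:P]\,d$, still bounded), supplementing it by the inflations of an h.s.o.p.\ of the bounded-order group $G/P$; a double-coset computation shows that the resulting classes restrict to systems of parameters on every elementary abelian subgroup of $G$ --- the norms covering the directions inside $E\cap P$ and the inflations covering those transverse to it --- so Quillen's description of $\Spec H^*(G;\F_p)$ yields an h.s.o.p.\ of bounded degree. Assertion (ii) then follows from the Lyndon--Hochschild--Serre spectral sequence $H^*(G/P;H^*(P;\F_p))\Rightarrow H^*(G;\F_p)$: $G/P$ has bounded order and, in each degree up to the relevant bound, $H^*(P;\F_p)$ has bounded dimension, so the $E_2$-page, and hence $H^*(G;\F_p)$, has bounded dimension in each such degree.

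Carlson's original conjecture then follows at once, since by the coclass theorems a $p$-group of coclass $c$ has rank bounded in terms of $p$ and $c$, and so is covered by Theorem~\ref{th:main}. One could instead attempt an induction, say on $|G|$ with $r$ fixed, passing to an index-$p$ subgroup --- which again has rank $\le r$ --- and reconstructing $H^*(G;\F_p)$ from the finitely many possibilities for the cohomology of that subgroup together with the $\Z/p$-action and the extension data; but controlling the automorphism groups of the candidate rings makes this route less clean than the regularity-and-parameters argument above.
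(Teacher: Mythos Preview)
Your overall strategy matches the paper's exactly: regularity reduces the problem to finding a collection of parameters of bounded degree and a dimension bound in low degrees; one uses a normal subgroup of bounded index with well-understood cohomology, inflates parameters from the quotient, norms parameters from the subgroup, and handles the dimension bound via the Lyndon--Hochschild--Serre spectral sequence. The difficulty you flag --- producing parameters of bounded degree --- is indeed the crux.

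However, there is a genuine gap at the step where you pass from the subgroup to $G$. You take $P$ merely \emph{powerful} and assert two things: that $\dim H^j(P)$ is bounded in terms of $p,r,j$, and that the Evens norm of an h.s.o.p.\ for $H^*(P)$ gives $G$-parameters for $P$. Neither is justified. For the second, the Mackey formula gives $\res^G_P \mathcal N^G_P(\zeta_i)=\prod_{g\in G/P}{}^g\zeta_i$, and products of $G$-conjugates of a system of parameters need not form a collection of parameters: if $G/P$ permutes the $\zeta_i$ transitively (e.g.\ $P=(\Z/p)^p$ with $G/P$ cyclically permuting the coordinate classes) then all the norms coincide and you have only one element. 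Your ``double-coset computation'' cannot repair this without some $G$-invariance built into the $\zeta_i$. The paper's fix is to take Dickson polynomials in classes $x_1,\dots,x_n\in H^2(N)$ that restrict to a basis of the Bockstein image $B(\Omega_1(N))$; Dickson polynomials are $\Gl_n(\F_p)$-invariant, so conjugation fixes them and the norm is just a power. But the existence of such $x_i$ is precisely the statement that $N$ is $\Omega$-\emph{extendible}, not merely powerful, and producing a characteristic subgroup of bounded index with this extra property is the real work (Theorem~\ref{th:sub} of the paper). The same strengthening is what makes the first assertion immediate: once $N$ is powerful and $\Omega$-extendible, $H^*(N)$ is isomorphic to the cohomology of an abelian $p$-group of rank $\le r$ (Theorem~\ref{th:cohomchar}), so $\dim H^j(N)\le\binom{j+r-1}{j}$; for a merely powerful $P$ no such bound is available by elementary means.
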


For a given prime $p$, the rank of a $p$-group is bounded in terms of the coclass (see Proposition~\ref{pr:rank}), so this implies the truth of Carlson's Conjecture.	

The coclass of a $p$-group is $n-c$, where $p^n$ is the order of the group and $c$ is the length of its lower central series. 
The coclass classification of finite $p$-groups was developed by Leedham-Green, Shalev and others (see \cite{LG}). It provides a great deal of information about the structure of $p$-groups of given coclass; however, much more precise descriptions are believed to be possible. One motivating conjecture \cite[{\S}1]{def} claims that the $p$-groups of a given coclass can be divided up into finitely many families, called coclass families, in such a way that each coclass family can be described by a single parametrised presentation; in addition, many structural invariants, such as cohomology, Schur multipliers and automorphism groups, can also be given by a single parametrised presentation on each coclass family. Indeed, Eick and Green \cite[1.3]{eg} conjecture that cohomology is constant on a coclass family provided the group is big enough. Our result can be viewed as a strong corroboration of this conjecture, in the sense that it shows that the conjecture can always be made true by refining the coclass families if necessary. 

The simplest example is that of the 2-groups of coclass 1. The three coclass families are dihedral, semi-dihedral and generalised quaternion. The cohomology ring is constant on each family, provided the group has order at least 16, even though there is no homomorphism between different groups in the same family that induces a cohomology isomorphism.

  Ideally, one would like to show that groups with similar structure have isomorphic cohomology rings: that is not what we or Carlson do. Instead, we use Benson's Regularity Conjecture, as proved in \cite{Sy1}, to bound the degrees of the generators and relations of the cohomology rings and hence the number of isomorphism classes.

A different proof for $p=2$ was developed recently by D\'{i}az, Garaialde and Gonz\'{a}lez \cite[1.1]{dgg}, which provides a more structural explanation for the isomorphisms, even at odd primes, where the method yields partial information. 

Recently, Guralnick and Tiep \cite[1.3]{GT} asked whether it was possible to bound $\dim H^n(G;V)$, for $V$ a simple $kG$-module over an algebraically closed field $k$ of characteristic $p$ in terms only of $n$, the sectional $p$-rank of $G$ and, perhaps, $p$. They state that it is likely that the problem can be reduced to two parts, the case of simple groups and the case of $p$-groups. As a by-product of our methods we prove the second of these and we obtain a result that is essentially independent of the prime.

\begin{theorem}
	\label{th:gt}
	 If $G$ is a $p$-group of rank at most $r$ then $\dim H^i(G) \leq \binom{r(\lceil \log_2r \rceil  +3 +e)+i-1}{i}$, where $e=0$ for $p$ odd and $e=1$ for $p=2$. 
	 \end{theorem}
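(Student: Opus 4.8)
The plan is to bound the full mod-$p$ Poincar\'e series $P_G(t):=\sum_{i\ge0}\dim H^i(G)\,t^i$ coefficientwise by $(1-t)^{-N}$, where $N=r(\lceil\log_2r\rceil+3+e)$: since $[t^i](1-t)^{-N}=\binom{N+i-1}{i}$, this is precisely the assertion. Observe that $(1-t)^{-N}$ is at the same time the mod-$p$ Poincar\'e series of $(\Z/p)^N$, independently of the prime, so in effect one is comparing $G$ to an elementary abelian group of bounded rank.

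Two preliminary facts. (i) For any extension $1\to K\to G\to Q\to1$ of finite $p$-groups, $P_G(t)\preceq P_K(t)\,P_Q(t)$ coefficientwise. Indeed the Lyndon--Hochschild--Serre spectral sequence gives $\dim H^i(G)\le\sum_{a+b=i}\dim H^a(Q;H^b(K))$, and since $Q$ is a $p$-group the $\F_pQ$-module $H^b(K)$ is a repeated extension of $\dim H^b(K)$ copies of the trivial module, so $\dim H^a(Q;H^b(K))\le\dim H^b(K)\cdot\dim H^a(Q)$ by the long exact cohomology sequence and induction. (ii) For a finite abelian $p$-group $A$ of rank $s$ one has $P_A(t)=(1-t)^{-s}$ exactly: write $A\cong\prod_{j=1}^s\Z/p^{a_j}$, use $H^*(A)\cong\bigotimes_jH^*(\Z/p^{a_j})$, and note $P_{\Z/p^a}(t)=1/(1-t)$ for every $a\ge1$ and every $p$ (a one-line case check). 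Fact (ii) is where all dependence on $p$ disappears.

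Now run (i) along the derived series $G=G^{(0)}\rhd G^{(1)}\rhd\cdots\rhd G^{(k)}=1$, where $k=\mathrm{dl}(G)$. Each factor $G^{(j)}/G^{(j+1)}$ is abelian of rank $\le r$, being a subquotient of $G$, so (ii) yields
\[
P_G(t)\ \preceq\ \prod_{j=0}^{k-1}P_{G^{(j)}/G^{(j+1)}}(t)\ =\ (1-t)^{-\sum_j\mathrm{rank}(G^{(j)}/G^{(j+1)})}\ \preceq\ (1-t)^{-kr},
\]
whence $\dim H^i(G)\le\binom{kr+i-1}{i}$. So it remains to bound the derived length: one needs $\mathrm{dl}(G)\le\lceil\log_2r\rceil+3$ for $p$ odd and $\le\lceil\log_2r\rceil+4$ for $p=2$.

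This last inequality is the heart of the matter. It is a statement purely about finite $p$-groups --- the logarithm reflecting that the ranks of the successive terms of the derived series fall off geometrically --- and I would look to the powerful-$p$-group machinery and the coclass literature for it, or else prove it by an induction in which $G$ is replaced at each stage by a characteristic subgroup of substantially smaller rank; pinning down the small additive constants, and in particular the extra unit for $p=2$ that produces the parameter $e$, is the delicate part. Granting it, $kr\le r(\lceil\log_2r\rceil+3+e)$ and we are done. Note that this route would rely only on the spectral sequence and the structure of $p$-groups, and in particular avoids Benson's regularity theorem that drives Theorem~\ref{th:main}.
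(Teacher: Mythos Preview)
Your facts (i) and (ii) are fine, and indeed the paper uses exactly the same Poincar\'e-series multiplication via the LHS spectral sequence. The difference is in \emph{which} extension one feeds into (i), and this is where your argument breaks.

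The derived-length bound you need, $\mathrm{dl}(G)\le\lceil\log_2 r\rceil+3+e$, is \emph{false}: the derived length of a $p$-group of rank $r$ is not bounded by any function of $r$ alone. For a concrete family, take $p\ge 5$ and let $G_n=\ker\bigl(\mathrm{SL}_2(\Z/p^{n})\to\mathrm{SL}_2(\F_p)\bigr)$. Each $G_n$ is a powerful $p$-group with $d(G_n)=3$, hence of rank $3$. Working in the uniform pro-$p$ group $\Gamma=\ker\bigl(\mathrm{SL}_2(\Z_p)\to\mathrm{SL}_2(\F_p)\bigr)$ one checks, using that $[\mathfrak{sl}_2,\mathfrak{sl}_2]=\mathfrak{sl}_2$, that $\Gamma'=\Gamma_2$ and inductively $\Gamma^{(k)}=\Gamma_{2^k}$; hence $\mathrm{dl}(G_n)\sim\log_2 n\to\infty$ while the rank stays equal to $3$. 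So iterating (i) along the derived series cannot give a bound independent of $|G|$, and your ``granting it'' step cannot be granted.

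The paper instead applies (i) to a \emph{single} extension $1\to N\to G\to G/N\to1$ coming from Theorem~\ref{th:sub}: here $N$ is powerful and $\Omega$-extendible of rank at most $r$, and $[G:N]\le p^{r(\lceil\log_2 r\rceil+2+e)}$. The point is that, by Theorem~\ref{th:cohomchar}, such an $N$ has the cohomology ring of an abelian $p$-group, so $P_N(t)\preceq(1-t)^{-r}$ directly --- one abelian layer suffices, rather than the $\mathrm{dl}(G)$ layers your argument would require. For the quotient, the crude bound $P_{G/N}(t)\preceq(1-t)^{-\log_p|G/N|}$ (Lemma~\ref{le:dimbound}) gives $(1-t)^{-r(\lceil\log_2 r\rceil+2+e)}$, and multiplying yields the stated $(1-t)^{-r(\lceil\log_2 r\rceil+3+e)}$. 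So the missing idea is precisely the structural input of Theorem~\ref{th:sub}: a normal subgroup of bounded index whose cohomology is already abelian-like, which collapses your unbounded tower of abelian quotients into one step.
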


To put this in perspective, note that the bound is equal to $\dim H^i(E)$ when $E$ is an elementary abelian $p$-group of rank $r(\lceil \log_2r \rceil  +3 +e)$. Observe that this bound  depends only on $r$ and $i$; the prime $p$ does not appear. On the other hand, it grows much too fast with respect to $i$; we know from the work of Quillen \cite[7.8]{quillen} that as $i$ increases $\dim H^i(G)$ grows like $c\cdot i^{a-1}$, where $a$ is the maximum rank of an elementary abelian subgroup. This can be fixed at the expense of a bound that involves $p$.

\begin{theorem}
	\label{thm:gt2}
There is a function $X(p,r)$ such that if $G$ is a $p$-group of rank $r$ then $\dim H^i(G) \leq X(p,r) \cdot i^{a-1}$.
\end{theorem}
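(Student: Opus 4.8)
The plan is to deduce Theorem~\ref{thm:gt2} from Theorem~\ref{th:main} together with Quillen's computation of the rate of growth of group cohomology. Since graded-isomorphic rings have the same Poincaré series, Theorem~\ref{th:main} implies that the set $\mathcal P$ of Poincaré series $\sum_{i\ge 0}\dim H^i(G)\,t^i$, as $G$ ranges over $p$-groups of rank $r$, is finite. For a fixed nontrivial such $G$ write $a=a(G)$ for the maximal rank of an elementary abelian subgroup; by Quillen this equals the Krull dimension of $H^*(G)$, hence the order of the pole of $P_G(t)$ at $t=1$, and by \cite[7.8]{quillen} there is a constant $c_G$ with $\dim H^i(G)\le c_G\,i^{a-1}$ for all large $i$. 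Enlarging $c_G$ absorbs the finitely many small values of $i$, using that $a\ge 1$ and so $i^{a-1}\ge 1$ when $i\ge 1$.

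The key point is that both $a(G)$ and the least such constant $c_G$ depend only on the Poincaré series $P_G$, which ranges over the finite set $\mathcal P$. Hence $X(p,r):=\max_{P\in\mathcal P}c(P)$ is well defined, depends only on $p$ and $r$, and satisfies $\dim H^i(G)\le X(p,r)\,i^{a(G)-1}$ for every $p$-group $G$ of rank $r$, the trivial group being an immediate case. In this form the argument is soft: the only substantial ingredient is Theorem~\ref{th:main} itself, and beyond that theorem there is no real obstacle.

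One can also argue without invoking the full finiteness statement, working instead with the ingredients of its proof. That proof bounds, in terms of $p$ and $r$, the degrees of a generating set of $H^*(G)$, from which one expects to extract a homogeneous system of parameters $\zeta_1,\dots,\zeta_a$ of degrees $d_1,\dots,d_a$ bounded by some $D=D(p,r)$. Writing $P_G(t)=f_G(t)/\prod_{j=1}^a(1-t^{d_j})$, the identity $\reg H^*(G)=0$ of \cite{Sy1} forces $\deg f_G\le\sum_j(d_j-1)$, while Theorem~\ref{th:gt} bounds $\dim H^i(G)$ for $i\le\sum_j d_j$, and hence the absolute values of the coefficients of $f_G$, all in terms of $p$ and $r$. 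Since $1/\prod_j(1-t^{d_j})\le 1/(1-t)^a$ coefficientwise and $[t^m]\,(1-t)^{-a}=\binom{m+a-1}{a-1}\le C_a\,m^{a-1}$, one reads off $\dim H^i(G)\le X(p,r)\,i^{a-1}$ directly. On this route the main obstacle is the first step — producing a system of parameters of degree bounded purely in terms of $p$ and $r$ — which is exactly where Benson's Regularity Theorem and the machinery of this paper are needed.
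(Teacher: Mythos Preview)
Your first (soft) argument is correct and is a genuinely different route from the paper's proof. The paper does not invoke Theorem~\ref{th:main} as a black box; instead it works directly with the ingredients produced in Section~\ref{sec:para}: it takes the collection of parameters of bounded degree, raises them to powers so that they all lie in a common degree $D$, passes to a Noether normalisation $f_1,\ldots,f_a$ (possibly after extending the field), and then uses $\reg H^*(G)=0$ together with \cite[2.1]{Sy2} to bound the top degree of module generators over $k[f_1,\ldots,f_a]$ by $a(D-1)$. Theorem~\ref{th:gt} then bounds the number of such generators, and the estimate $\dim H^i(G)\le X(p,r)\,i^{a-1}$ follows by counting monomials. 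This is precisely what you outline in your second paragraph, so that sketch is essentially the paper's proof.

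The trade-off is clear. Your soft argument is shorter and conceptually cleaner: once Theorem~\ref{th:main} is in hand, finiteness of the set of Poincar\'e series is immediate, and both the pole order $a$ and the optimal constant $c$ are functionals of the series, so taking a maximum over a finite set finishes the job. The paper's approach, by contrast, is constructive: as the author remarks immediately after the proof, all the bounds can be written down explicitly to give a concrete (if unwieldy) formula for $X(p,r)$. Your soft route gives no such formula, since it appeals to the existence statement of Theorem~\ref{th:main} rather than to the explicit parameters behind it.
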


We can also extend these results to pro-$p$ groups.

\begin{corollary}
	\label{cor:pro-p} Theorems~\ref{th:main} and \ref{th:gt} also hold for pro-$p$ groups of bounded rank and the usual continuous cohomology.
	\end{corollary}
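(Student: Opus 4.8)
The plan is to push both assertions down to the finite quotients of $G$ and then quote the theorems already proved for finite $p$-groups. Write $G=\varprojlim_N G/N$ over the open normal subgroups $N$, so each $G/N$ is a finite $p$-group, and use the standard identification of graded rings $H^*_{\mathrm{cont}}(G;\F_p)\cong\varinjlim_N H^*(G/N;\F_p)$, with inflation as the transition maps. Every closed subgroup of $G/N$ is the image of a closed subgroup of $G$, hence topologically generated by at most $r$ elements; so each $G/N$ is a finite $p$-group of rank at most $r$, and Theorems~\ref{th:main} and \ref{th:gt} apply to all of them.

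For Theorem~\ref{th:gt}: the bound there is non-decreasing in $r$, so it bounds $\dim H^i(G/N)$ for every $N$. Since $H^i_{\mathrm{cont}}(G)$ is the directed union of the images of the maps $H^i(G/N)\to H^i_{\mathrm{cont}}(G)$, each of dimension at most that bound, and a directed union of subspaces of a vector space, each of dimension at most $B$, has dimension at most $B$ --- any $B+1$ linearly independent vectors would all lie in a single member of the family --- the same bound holds for $\dim H^i_{\mathrm{cont}}(G)$. In particular each $H^i_{\mathrm{cont}}(G)$ is finite-dimensional, so the directed family of subspaces above stabilises in every degree.

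For Theorem~\ref{th:main}: first, $H^*_{\mathrm{cont}}(G)$ is a finitely generated $\F_p$-algebra --- it is generated in degrees at most $D=D(p,r)$, the uniform bound on generator degrees produced in the proof of Theorem~\ref{th:main}, and its graded pieces are finite-dimensional by the previous paragraph. Choose an open normal subgroup $N$ so that a finite generating set of $H^*_{\mathrm{cont}}(G)$ lies in the image of the inflation $H^*(G/N)\to H^*_{\mathrm{cont}}(G)$; this inflation is then surjective, with kernel $K$ say. Now $K$ is an ideal of the Noetherian ring $H^*(G/N)$, hence finitely generated; and $K=\bigcup_{N'\subseteq N}\ker\bigl(H^*(G/N)\to H^*(G/N')\bigr)$ is a directed union of ideals (an element dies in the colimit iff it dies at a finite stage), so $K=\ker\bigl(H^*(G/N)\to H^*(G/N')\bigr)$ for a sufficiently small open normal subgroup $N'\subseteq N$. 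Therefore, as graded rings,
\[
  H^*_{\mathrm{cont}}(G)\;\cong\;\Ima\bigl(H^*(G/N)\xrightarrow{\ \mathrm{inf}\ }H^*(G/N')\bigr).
\]
By Theorem~\ref{th:main}, $H^*(G/N)$ and $H^*(G/N')$ each lie in a finite list of graded rings depending only on $p$ and $r$; and between two connected finitely generated graded $\F_p$-algebras with finite-dimensional graded pieces there are only finitely many graded algebra homomorphisms, since such a map is pinned down by the images of a finite generating set and each image ranges over a finite set. Hence the displayed image lies in a finite list of graded rings depending only on $p$ and $r$, which is exactly Theorem~\ref{th:main} for $G$, and Corollary~\ref{cor:pro-p} follows.

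I expect the middle step to be the real obstacle. Relative to any one finite quotient $G/N$, the colimit $H^*_{\mathrm{cont}}(G)$ may well pick up generators or relations in arbitrarily high degree, so the degree bounds from the finite case cannot be transferred to it directly, and one should not try to bound a presentation of $H^*_{\mathrm{cont}}(G)$ at all. The idea that avoids this is to let Noetherianity of $H^*(G/N)$ truncate the colimit, realising $H^*_{\mathrm{cont}}(G)$ as the image of a single inflation map between two cohomology rings already known to live in finite lists; finiteness of the relevant $\Hom$-sets of graded algebras then finishes the job. The Theorem~\ref{th:gt} half needs no such care, as dimensions cannot increase under a directed colimit.
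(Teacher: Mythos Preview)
Your argument is correct and follows the same overall architecture as the paper's proof: pass to the colimit over finite quotients, note that each $G/N$ inherits the rank bound, get a surjective inflation $H^*(G/N)\to H^*_{\mathrm{cont}}(G)$, use noetherianity of $H^*(G/N)$ to realise $H^*_{\mathrm{cont}}(G)$ as the image of a single inflation between two finite quotients, and finish by counting graded algebra maps between members of two finite lists.

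The one substantive difference is in how you obtain finite generation of $H^*_{\mathrm{cont}}(G)$. The paper appeals to structure theory: $G$ has a uniformly powerful open normal subgroup $U$, Lazard's theorem gives $H^*(U)\cong\Lambda(H^1(U))$, and a spectral sequence argument then yields noetherianity of $H^*(G)$. You instead bootstrap from the finite case: the uniform bound $D(p,r)$ on generator degrees for the $H^*(G/N)$ passes to the colimit (any element of $H^*_{\mathrm{cont}}(G)$ is the image of a polynomial in low-degree elements of some $H^*(G/N)$), and finite-dimensionality in each degree comes from the Theorem~\ref{th:gt} half. This is more self-contained---it uses only what the paper has already proved and avoids invoking Lazard---at the cost of depending on the explicit degree bounds from Proposition~\ref{pr:par}. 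Your final paragraph's remark that ``degree bounds from the finite case cannot be transferred directly'' is a little at odds with the fact that you \emph{do} transfer the generator-degree bound; presumably you mean that one cannot bound the relation degrees of $H^*_{\mathrm{cont}}(G)$ this way, which is true and is precisely why you route through the image-of-inflation description instead.
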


I wish to thank Antonio D\'{i}az Ramos for explaining to me some of his ideas on this subject and Bob Guralnick for pointing out the relevance of the results here to the question in \cite{GT}.

\section{Generators and Relations}
\label{sec:par}

We fix a prime $p$. All groups will be $p$-groups and all cohomology rings will have coefficients in the field of $p$ elements, $\mathbb F_p$, unless otherwise indicated. By an isomorphism of cohomology rings we mean an isomorphism as graded $\mathbb F_p$-algebras.

\begin{definition}
	A collection of parameters for the cohomology of a group $G$ is a finite collection of homogeneous elements $x_1,  \ldots , x_n \in H^*(G)$ such that $H^*(G)$ is finite over $\mathbb F_p[x_1,\ldots ,x_n]$. There is no requirement of minimality or independence and there may even be repetitions. The word \emph{collection} is meant to emphasise that this is not what is usually called a system of parameters.
	\end{definition}

Consider a set $C = \{ G_{\lambda} \}_{\lambda \in \Lambda}$ of finite $p$-groups.

\begin{proposition}
\label{pr:par}
The following conditions on $C$ are equivalent.
\begin{enumerate}
\item
The cohomology rings $H^*(G_{\lambda})$ fall into finitely many isomorphism classes.
\item
There exist numbers $D,M,N \in \mathbb N$ such that 
\begin{enumerate}
\item
each $H^*(G_{\lambda})$ has a collection of parameters $x_1,  \ldots , x_n$, with $n \leq N$ and $\deg x_i \leq D$, and
\item
 for each $\lambda$, $\dim_{\mathbb F_p} \oplus _{i=0}^L H^i(G_{\lambda}) \leq M$, where $L=\max \{ 2N(D-1), 1 \}$.
 \end{enumerate}
\end{enumerate}
\end{proposition}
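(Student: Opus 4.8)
The plan is to prove the two implications separately; $(1)\Rightarrow(2)$ is routine, and $(2)\Rightarrow(1)$ carries the content.

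For $(1)\Rightarrow(2)$: if the $H^*(G_\lambda)$ realise only finitely many graded isomorphism classes, represented by finitely generated graded $\mathbb F_p$-algebras $R_1,\dots,R_k$, then for each $j$ a finite set of homogeneous algebra generators of $R_j$ is trivially a collection of parameters, since the subalgebra they generate is all of $R_j$. Letting $N$ bound the number of such generators and $D$ their degrees over all $j$, putting $L=\max\{2N(D-1),1\}$, and setting $M=\max_j\dim_{\mathbb F_p}\bigoplus_{i=0}^{L}(R_j)^i$ (finite, as each $R_j$ is Noetherian), one gets (a) and (b) for $H^*(G_\lambda)$ by transporting a collection of parameters and the dimension bound along an isomorphism $H^*(G_\lambda)\cong R_j$.

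For $(2)\Rightarrow(1)$: fix $D,M,N$ as in (2) and a group $G=G_\lambda$ with collection of parameters $x_1,\dots,x_n\in H^*(G)$, $n\le N$, $\deg x_i=d_i\le D$, so that $T_i\mapsto x_i$ makes $H^*(G)$ a finitely generated graded module over $P=\mathbb F_p[T_1,\dots,T_n]$ with $\deg T_i=d_i$. The crucial input is Benson's Regularity Conjecture, proved in \cite{Sy1}: $\reg H^*(G)=0$, which in particular says $H^i_{\mathfrak m}(H^*(G))$ vanishes in degrees above $-i$ for every $i$. From this I would deduce, via graded local duality over $P$ and the standard comparison between local cohomology and a minimal free resolution, that $H^*(G)$ is generated as a $P$-module in degrees $\le\sum_i(d_i-1)\le N(D-1)$ and that the first syzygies of a minimal generating set $m_1,\dots,m_s$ lie in degrees $\le 2\sum_i(d_i-1)\le L$. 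Adjoining the $m_l$ as further algebra generators then yields a presentation $H^*(G)\cong\mathbb F_p[T_1,\dots,T_n,S_1,\dots,S_s]/J$ with $\deg T_i\le D$, $\deg S_l\le N(D-1)$, the ideal $J$ generated in degrees $\le L$, and $s\le\dim_{\mathbb F_p}\bigl(H^*(G)/(x_1,\dots,x_n)\bigr)\le\dim_{\mathbb F_p}\bigoplus_{i=0}^{L}H^i(G)\le M$ by (b) (the degenerate cases $n\le 1$ or $D\le 1$ being treated directly). Only finitely many choices of the numerical data $n,s$ and of the degrees of the $T_i,S_l$ are possible, all bounded in terms of $D,M,N$; for each, the ambient polynomial ring is fixed, its truncation in degrees $\le L$ is a finite-dimensional $\mathbb F_p$-vector space of dimension bounded in terms of $D,M,N$, and $J$, being generated in degrees $\le L$, is determined by its intersection with that truncation — one of the finitely many subspaces of a finite-dimensional vector space over the finite field $\mathbb F_p$. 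Hence $H^*(G_\lambda)$ falls into one of finitely many graded isomorphism classes, which is (1).

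I expect the main obstacle to be precisely the middle step of $(2)\Rightarrow(1)$: extracting the explicit degree bounds $N(D-1)$ and $2N(D-1)$ from the single assertion $\reg H^*(G)=0$. Two features make this delicate. First, $H^*(G)$ is in general not Cohen--Macaulay, so one cannot simply pass to a quotient by a regular sequence; instead one must use a filter-regular sequence and the vanishing of the $H^i_{\mathfrak m}$ above degree $-i$ to bound the top degree of $H^*(G)/(x_1,\dots,x_n)$, and dually the shifts appearing in a minimal free resolution of $H^*(G)$ over $P$. Second, a collection of parameters is weaker than a homogeneous system of parameters — the $x_i$ need be neither algebraically independent nor of minimal number, so $P\to H^*(G)$ may fail to be injective and the $x_i$ need not form a regular sequence — so the argument must be organised to work over the (possibly non-polynomial) image of $P$ and to land on exactly the constants in the statement.
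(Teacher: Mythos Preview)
Your proposal is correct and follows the same route as the paper: use $\reg H^*(G_\lambda)=0$ to bound the degrees of generators and relations, then count the finitely many graded ideals in a fixed finite-dimensional truncation over $\mathbb F_p$. The only difference is packaging---the paper cites \cite[2.1]{Sy2} directly for the bounds on algebra generators (degree $\le\max\{N(D-1),D\}$) and relations (degree $\le\max\{2N(D-1),N(D-1)+1,D\}$) and bounds the number of generators by $M$ via linear independence in degrees $\le L$, whereas you pass through $P$-module generators and first syzygies; the difficulties you anticipate concerning collections versus systems of parameters are exactly what that citation absorbs.
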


\begin{proof}
That (1) implies (2) is trivial, so we concentrate on the other direction. By the Regularity Theorem \cite[0.2]{Sy1}, $\reg H^*(G_{\lambda}) =0$ and, as a consequence, $H^*(G_{\lambda})$ has generators as a graded-commutative $\mathbb F_p$-algebra in degrees at most $\max \{N(D-1),D \}$ and relations in degrees at most $\max \{2N(D-1), N(D-1)+1,D \}$, which is easily seen to be bounded by $L$, see \cite[2.1]{Sy2}.

Thus, each $H^*(G_{\lambda})$ is a quotient of a graded-commutative polynomial ring $\mathbb F_p [z_1, \ldots , z_\ell]$, with the $z_i$ homogeneous of degree at most $\max \{N(D-1),D \}$. We can assume the $z_i$ to be linearly independent over $\mathbb F_p$, so $\ell \leq M$.  The kernel of the map from $\mathbb F_p [z_1, \ldots , z_\ell]$ to $H^*(G_{\lambda})$ is an ideal generated in degrees at most $L$, hence is completely determined by its part in this range of degrees; there are only finitely many possibilities, simply because we are considering subsets of a finite set.
\end{proof}

\begin{lemma}
\label{le:dimbound}

Suppose that there are two sequences of numbers $U(i),V(i)$ such that each $G_{\lambda}$ has a normal subgroup $H_{\lambda}$ satisfying $\dim H^i(H_{\lambda}) \leq U(i)$ and $\dim H^i(G_{\lambda}/H_{\lambda}) \leq V(i)$. Then part (b) of condition (2) holds. If there is a number $W$ such that $|G_{\lambda}/H_{\lambda}| \leq p^W$ for all $\lambda$ then we can take $V(i)=\binom{W+i-1}{W-1}$.
\end{lemma}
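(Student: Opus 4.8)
The natural tool is the Lyndon--Hochschild--Serre spectral sequence of the extension $1 \to H_\lambda \to G_\lambda \to G_\lambda/H_\lambda \to 1$,
\[ E_2^{s,t} = H^s\bigl(G_\lambda/H_\lambda;\, H^t(H_\lambda)\bigr) \Longrightarrow H^{s+t}(G_\lambda). \]
The only point needing care is that the coefficients on the $E_2$-page are not the trivial module. However, $G_\lambda/H_\lambda$ is a $p$-group, so $\mathbb{F}_p$ is its unique simple $\mathbb{F}_p$-module; hence any finite-dimensional $\mathbb{F}_p[G_\lambda/H_\lambda]$-module $M$ has a filtration by submodules of length $\dim_{\mathbb{F}_p} M$ with all quotients isomorphic to $\mathbb{F}_p$. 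Feeding the associated short exact sequences into the long exact cohomology sequence and inducting on the length gives $\dim H^s(G_\lambda/H_\lambda;\, M) \leq \dim_{\mathbb{F}_p} M \cdot \dim H^s(G_\lambda/H_\lambda)$. Taking $M = H^t(H_\lambda)$ yields $\dim E_2^{s,t} \leq U(t)\,V(s)$.

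Since $H^i(G_\lambda)$ carries a finite filtration whose graded pieces are subquotients of the $E_2^{s,t}$ with $s+t=i$, we get $\dim H^i(G_\lambda) \leq \sum_{s+t=i} U(t)\,V(s)$, and therefore
\[ \dim_{\mathbb{F}_p} \bigoplus_{i=0}^{L} H^i(G_\lambda) \;\leq\; \sum_{i=0}^{L}\ \sum_{s+t=i} U(t)\,V(s) \;=:\; M , \]
a number depending only on $L$, $U$ and $V$, and not on $\lambda$. This is exactly part (b) of condition~(2).

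For the last assertion it remains to show that a $p$-group $Q$ with $|Q| \leq p^W$ satisfies $\dim H^i(Q) \leq \binom{W+i-1}{W-1}$. I would argue by induction on $W$, the small cases being immediate (for $Q \cong \mathbb{Z}/p$ one has $\dim H^i(Q)=1=\binom{i}{0}$, and the trivial group is even easier). For the inductive step, choose a normal subgroup $N \triangleleft Q$ of index $p$. In the spectral sequence for $1 \to N \to Q \to \mathbb{Z}/p \to 1$ one has $\dim H^s(\mathbb{Z}/p) = 1$ for every $s \ge 0$ (whether $p$ is odd or $p=2$), so the estimate above specialises to $\dim E_2^{s,t} \leq \dim H^t(N)$; hence, using the inductive hypothesis for $N$ (whose order is at most $p^{W-1}$),
\[ \dim H^i(Q) \;\leq\; \sum_{t=0}^{i} \dim H^t(N) \;\leq\; \sum_{t=0}^{i} \binom{W-2+t}{W-2} \;=\; \binom{W-1+i}{W-1}, \]
the last equality being the hockey-stick identity. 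This completes the induction, and hence the proof. I do not anticipate a real obstacle here: the argument is a routine dimension count with the Lyndon--Hochschild--Serre spectral sequence, and the only steps that require attention are the reduction from module to trivial coefficients over a $p$-group and the bookkeeping with binomial coefficients in the induction.
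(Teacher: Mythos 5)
Your proof is correct and follows essentially the same route as the paper: the Lyndon--Hochschild--Serre spectral sequence together with the bound $\dim H^s(Q;M)\leq \dim H^s(Q)\cdot \dim_{\mathbb F_p}M$ obtained from a composition series with trivial factors, and then an induction on the order for the binomial bound. The only (immaterial) difference is that in the induction you pass to a normal subgroup of index $p$ where the paper takes a central subgroup of order $p$; both give the same hockey-stick computation.
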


\begin{proof}
Use the Lyndon-Hochschild-Serre spectral sequence for $H_\lambda \unlhd G_\lambda$. It is sufficient to bound the dimension of the entries on the $E_2$-page in the given range and this follows from the inequality $\dim H^r(G;M) \leq \dim H^r(G) \cdot \dim M$ for a $p$-group $G$ and an $\mathbb F_pG$-module $M$. This inequality is proved by considering a composition series for $M$, where every factor is a trivial module $\mathbb F_p$.

The second part is the statement that if $|G|=p^n$ then $\dim H^i (G) \leq \binom{n+i-1}{i}$. This can easily be proved by induction on $|G|$ by taking a central subgroup of order $p$ and considering the spectral sequence again.
\end{proof}

Now we concentrate on part (a) of condition (2).

If $H \leq G$ then by a collection of $G$-parameters for $H^*(H)$ we mean a finite collection of homogeneous elements of $H^*(G)$ that restricts to a collection of parameters on $H^*(H)$.

\begin{lemma}
\label{le:combine}
If $H$ is a normal subgroup of $G$ and we have a collection of parameters for $H^*(G/H)$ and a collection of $G$-parameters for $H^*(H)$, then the inflations of the former together with the latter form a collection of parameters for $H^*(G)$. 
\end{lemma}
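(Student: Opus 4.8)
The plan is to translate ``collection of parameters'' into a statement about the cohomology variety and then verify that statement on each elementary abelian subgroup by a short linear-algebra computation. Write $\bar G=G/H$ and, for a finite $p$-group $K$, set $V_K=\Spec H^*(K)_{\mathrm{red}}$. Since $H^*(K)$ is a finitely generated graded $\F_p$-algebra (Evens--Venkov), a finite set $z_1,\dots,z_m$ of homogeneous positive-degree elements of $H^*(K)$ is a collection of parameters for $H^*(K)$ precisely when their common zero locus in $V_K$ is just the cone point, i.e. when the radical of the ideal $(z_1,\dots,z_m)$ is the maximal graded ideal; this is the graded form of Noether normalisation. I will also use two standard consequences of Evens's theorem that $H^*(L)$ is a finitely generated $H^*(K)$-module via restriction whenever $L\le K$: first, a collection of parameters for $H^*(K)$ restricts to one for $H^*(L)$; and second, the preimage of the cone point of $V_K$ under the map $V_L\to V_K$ induced by restriction is the cone point of $V_L$.

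Apply the criterion to the set $\{\Inf^G_{\bar G}y_i\}\cup\{x_j\}$ in $H^*(G)$. By Quillen's theorem the map $\coprod_E V_E\to V_G$, taken over all elementary abelian subgroups $E\le G$, is surjective and takes cone points to the cone point; combined with the second fact above, the common zero locus of a set of elements of $H^*(G)$ is the cone point of $V_G$ if and only if, for every elementary abelian $E\le G$, the common zero locus of their restrictions to $H^*(E)$ is the cone point of $V_E$. So fix such an $E$, put $E_0=E\cap H\unlhd E$ and $\bar E=E/E_0$, and recall that for elementary abelian groups the functor $V_{(-)}$ sends subgroup inclusions to inclusions of linear subspaces and quotient maps to linear surjections, and that $\ker(V_E\to V_{\bar E})$ is precisely the image of $V_{E_0}\hookrightarrow V_E$ (all of this because $H^*(E)_{\mathrm{red}}$ is a polynomial ring on a space dual to $E$).

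The computation has two steps. First, by the compatibility of inflation and restriction, $\res^G_E\Inf^G_{\bar G}y_i$ equals the pullback along $V_E\to V_{\bar E}$ of $\res^{\bar G}_{\bar E}y_i$; as $\{y_i\}$ is a collection of parameters for $H^*(\bar G)$, the restrictions $\res^{\bar G}_{\bar E}y_i$ have only the cone point of $V_{\bar E}$ as common zero, so the common zero locus of the classes $\res^G_E\Inf^G_{\bar G}y_i$ in $V_E$ is exactly $\ker(V_E\to V_{\bar E})=V_{E_0}$. Second, along the inclusion $V_{E_0}\hookrightarrow V_E$ the class $\res^G_E x_j$ pulls back to $\res^G_{E_0}x_j$ by transitivity of restriction; since $E_0\le H$ and $\{x_j\}$ restricts to a collection of parameters for $H^*(H)$, it further restricts to one for $H^*(E_0)$, so the classes $\res^G_{E_0}x_j$ have only the cone point of $V_{E_0}$ as common zero. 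Intersecting the two loci, the common zero locus of the whole set $\{\Inf^G_{\bar G}y_i\}\cup\{x_j\}$ in $V_E$ is the cone point; as this holds for every $E$, the set is a collection of parameters for $H^*(G)$.

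I expect the only real care to be needed in the elementary abelian step: identifying the vanishing locus of the inflated classes with the linear subspace $V_{E_0}$ (which uses that an inflated class from $\bar E$ vanishes on $E_0$, plus a dimension count after choosing a splitting $E=E_0\times E''$) and checking that restriction and inflation for the pair $E_0\unlhd E$ are compatible, under the evident maps, with those for $H\unlhd G$. There is a parallel argument avoiding Quillen's theorem: in the Lyndon--Hochschild--Serre spectral sequence of $H\unlhd G$ the classes $\Inf^G_{\bar G}y_i$ and $\res^G_H x_j$ are permanent cycles living in the bottom row $E_2^{*,0}=H^*(\bar G)$ and the left edge $E_2^{0,*}=H^*(H)^{\bar G}$, and, using the relative Evens--Venkov theorem that $H^*(\bar G;H^*(H))$ is a finite module over $H^*(\bar G)\otimes_{\F_p}H^*(H)^{\bar G}$ together with the hypotheses, one shows successively that $E_2$, then $E_\infty$, then $H^*(G)$ is finite over the subalgebra generated by these classes.
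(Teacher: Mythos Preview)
Your proof is correct and follows essentially the same route as the paper: reduce via Quillen to elementary abelian subgroups $E\le G$, set $E_0=E\cap H$ and $\bar E=E/E_0$, and use the split extension $E_0\to E\to\bar E$ to see that the inflated parameters cut out $V_{E_0}$ while the restricted $G$-parameters for $H^*(H)$ cut this down to the cone point. The paper phrases the elementary abelian step more tersely (invoking the splitting directly rather than the variety picture), and it does not pursue the alternative LHS spectral-sequence argument you sketch at the end, but the strategy is the same.
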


\begin{proof}
We can check whether we have a collection of parameters for $H^*(G)$ by checking that the restrictions to any elementary abelian subgroup  $E$ form a system of parameters. This is because, by Quillen's $F$-isomorphism Theorem (see e.g.\ \cite[5.6.4]{Benson2}), it is enough to show that these elements restrict to a collection of parameters for $\lim_{\mathcal C_E} H^*(E)$, where the objects are the elementary abelian subgroups of $G$ and the morphisms are generated by inclusions and conjugations. But this limit can be realized as a submodule of $\oplus_E H^*(E)$, which will be finitely generated over the parameters. Consider the diagram below and apply cohomology.

\[
\xymatrix{
H \ar[r]  & G \ar[r]  & G/H \\
E \cap H \ar[r] \ar[u] & E \ar[r] \ar[u] & E/(E \cap H) \ar[u]
}
\]

By restriction we obtain a collection of parameters for $H^*(E/(E \cap H))$ and a collection of $E$-parameters for $H^*(E \cap H)$. The bottom row is split, so the inflations of the former together the latter combine to form a collection of parameters for  $H^*(E)$. 
\end{proof}

\section{Powerful, $\Omega$-Extendible Groups}
\label{se:first}

First we recall some standard definitions; in all of them $G$ is a finite $p$-group. The minimal number of generators of $G$ is denoted by $d(G)$ and the rank of $G$ is the maximum of the $d(H)$ as $H$ runs through the subgroups of $G$. The subgroup of $G$ generated by all the elements of order at most $p^r$ is denoted by $\Omega_r(G)$ and the subgroup generated by all the $p^r$th powers of elements of $G$ is denoted by $G^{p^r}$. The Frattini subgroup is $\Phi(G)=[G,G]G^p$ and the terms of the lower central series are denoted by $\gamma_i(G)$.

The group $G$ is said to be $p$-central if $\Omega_1(G)$ is contained in the centre and is called powerful if $[G,G] \leq G^p$ for odd $p$ or $[G,G] \leq G^4$ if $p=2$. 

There is a cohomological interpretation of the property of being powerful.

\begin{proposition}
	\label{pr:powerchar}
A $p$-group $G$ is powerful if and only if the only relations in $H^2(G)$ are squares.
\end{proposition}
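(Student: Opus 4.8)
The plan is to relate the statement ``the only relations in $H^2(G)$ are squares'' to the structure of $G$ via the low-degree terms of the cohomology ring, which are controlled by $d(G)$ and the relations among generators. Recall that $H^1(G;\mathbb F_p) \cong \Hom(G,\mathbb F_p) \cong (G/\Phi(G))^{\vee}$ has dimension $d = d(G)$, and that $H^2(G;\mathbb F_p)$ fits into the familiar five-term exact sequence, so that the part of $H^2(G)$ \emph{not} coming from products of degree-one classes is dual to (a quotient related to) the relation module of $G$. More precisely, if $1 \to R \to F \to G \to 1$ is a minimal presentation with $F$ free of rank $d$, then $H^2(G)$ receives the cup products $H^1(G) \otimes H^1(G) \to H^2(G)$, whose image is the space of ``decomposable'' classes, and the quotient $H^2(G)$ modulo decomposables is naturally identified with $\Hom(R/[R,F]R^p, \mathbb F_p)$, i.e.\ with the dual of the space of minimal relations of $G$ modulo those forced by working in a $\mathbb F_p$-algebra.

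First I would make precise what ``relations in $H^2(G)$'' means: writing $H^1(G) = \langle t_1,\dots,t_d\rangle$, the subalgebra of $H^*(G)$ generated in degree one is a quotient of the graded-commutative polynomial algebra on $t_1,\dots,t_d$, and its degree-two relations are, for $p$ odd, exactly the classes of quadratic forms $\sum a_{ij} t_i t_j$ (with $t_it_j = -t_jt_i$, so these are genuinely the degree-2 part of the free graded-commutative algebra, of dimension $\binom d2$) that map to zero in $H^2(G)$; for $p=2$ one must be careful that $t_i^2$ need not be decomposable in the free algebra sense — here the ``squares'' $t_i^2$ are allowed, and the assertion is that all degree-two relations are spanned by elements of the form $\sum a_i t_i^2$... but in fact for $p$ odd $t_i^2=0$ automatically, so ``the only relations are squares'' should be read as: \emph{the cup product map} $\Lambda^2 H^1(G) \to H^2(G)$ (for $p$ odd; the appropriate variant for $p=2$) \emph{is injective}. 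Equivalently, $H^2(G)$ modulo the image of cup products from degree one has dimension equal to the dimension of the relation space; equivalently, the Bockstein and all the relations are ``as small as possible.''

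The key step is then the dictionary, due essentially to the interpretation of the mod-$p$ cohomology of $G$ in low degrees via a minimal presentation, between these relations and the structure of $R/[R,F]R^p$: a relation word $r \in F$ contributes a ``square'' type relation precisely when $r$ is, modulo $[R,F]R^p$ and modulo the commutator relations among the generators, a product of $p$-th powers of the $x_i$, i.e.\ $r \equiv x_{i_1}^p \cdots x_{i_k}^p$. Unwinding, the condition that \emph{every} minimal relation is of this form is exactly the condition that $[F,F] \subseteq R\, F^p$, which pulls back to $[G,G] \subseteq G^p$ — for $p$ odd this is precisely powerful, and for $p=2$ the analogous bookkeeping (the Bockstein on $t_i$ being $t_i^2$, so that the $4$-th powers enter) gives $[G,G]\subseteq G^4$, again precisely powerful. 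I would therefore organize the proof as: (i) identify $H^2(G)/(\text{decomposables})$ with the dual of the minimal relation space $R/[R,F]R^p$; (ii) identify the decomposable part with $\Lambda^2 H^1$ (resp.\ its $p=2$ analogue) and compute its image; (iii) translate ``all relations are squares'' into $[G,G] \le G^p$ (resp.\ $G^4$) via the presentation, handling the prime $2$ separately because of the Bockstein $t \mapsto t^2$.

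The main obstacle will be the characteristic-two case: there the graded-commutative polynomial algebra on degree-one classes is an honest polynomial algebra (squares do not vanish), so ``decomposable'' and ``the squares $t_i^2$'' interact, and one must correctly account for the Bockstein $\beta(t_i) = t_i^2$ and the resulting appearance of $G^4$ rather than $G^2$ in the powerful condition. Getting the signs and the $p=2$ bookkeeping exactly right in the identification of $H^2(G)$ modulo decomposables — and in particular checking that the relations coming from $G^4$ (and not merely $G^2$) are the ones realized as squares — is the delicate point; the odd-primary case is essentially a direct translation once the five-term sequence and the cup-product description of the decomposables are in hand.
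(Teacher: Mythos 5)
The paper does not actually prove this proposition internally: its ``proof'' is the one-line observation that the statement is a rephrasing of results 2.7 and 2.10 of Minh--Symonds \cite{ms}. Your overall strategy --- take a minimal presentation $1\to R\to F\to G\to 1$, identify $H^2(G)$ via the five-term sequence with the dual of the relation module $R/[R,F]R^p$, and compare the degree-two ring relations with the commutator and $p$-th power structure of $\Phi(F)$ modulo $\Phi_2(F):=[\Phi(F),F]\Phi(F)^p$ --- is indeed the standard route and is essentially what the cited source does. However, the execution has a genuine gap at the crucial translation step, and the $p=2$ case (which is the entire content of the ``squares'' phrasing, since for odd $p$ squares of degree-one classes vanish identically) is only flagged, not carried out.

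Concretely: for a minimal presentation it is \emph{all} of $H^2(G)$, not $H^2(G)$ modulo decomposables, that is identified with $\Hom(R/[R,F]R^p,\mathbb F_p)$. Under this identification a degree-two ring relation (for $p$ odd) is an element of $\ker\bigl(\Lambda^2H^1(G)\to H^2(G)\bigr)$, i.e.\ an alternating form on $V=G/\Phi(G)$ annihilating the image of $R$ in the commutator part $[F,F]F^p\Phi_2(F)/F^p\Phi_2(F)\cong\Lambda^2V$. So ``no degree-two relations'' is the condition that the relators \emph{span} that commutator part, i.e.\ $[F,F]\le RF^p\Phi_2(F)$, equivalently $[G,G]\le G^p[\Phi(G),G]$; one then still needs the short descent argument showing this forces $[G,G]\le G^p$. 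Your intermediate formulation --- ``every minimal relation is a product of $p$-th powers'' --- is the opposite condition: it says the relators have trivial image in $\Lambda^2V$, which makes the kernel of the cup product as large as possible and hence produces the maximal number of degree-two relations. The correct conclusion $[F,F]\subseteq RF^p$ that you then write down does not follow from the sentence preceding it. Finally, for $p=2$ you would need to show that $\ker\bigl(S^2H^1(G)\to H^2(G)\bigr)$ is contained in the span of the Frobenius image $\{t^2\}$ exactly when the relators span the appropriate complement inside $\Phi(F)/\Phi_2(F)$, and to track why this forces $[G,G]\le G^4$ rather than $[G,G]\le G^2$; none of that bookkeeping is present. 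As it stands the proposal identifies the right circle of ideas but does not contain a proof of either implication.
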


Here we are implicitly taking a minimal set of homogeneous generators as a graded commutative ring. Of course, when $p$ is odd there are no relations in degree 2, but this formulation is valid for all $p$, odd or even.

\begin{proof} We have just rephrased \cite[2.7 and 2.10]{ms}.
\end{proof}

\begin{definition}(\cite[after 3.4]{ms}, \cite[\S1]{weigel})
A $p$-group $G$ is called $\Omega$-extendible if it is $p$-central and there exists a central extension $1 \rightarrow E \rightarrow F \rightarrow G \rightarrow 1$ such that $E=\Omega_1(F) =\Omega_2(F)^p$. The group is said to have the $\Omega$-extension property, or $\Omega$EP, if it is $p$-central and there is a $p$-central group $H$ such that $G \cong H/\Omega_1(H)$.
\end{definition}

It is easy to see that if $G$ is $\Omega$-extendible then it has the $\Omega$-extension property. In fact the two are equivalent when $p$ is odd: see Section~\ref{se:misc} for a discussion of these definitions.
Much of the literature refers to the $\Omega$-extension property, but $\Omega$-extendibility seems to work better at the prime 2.

There is also a cohomological characterisation of $\Omega$-extendibility. For an elementary abelian $p$-group $A$, let $B(A) \leq H^2(A)$ be the subgroup consisting of the image of the Bockstein map; this is the same as the image of $H^2(A; \mathbb Z)$. Write $A = \times_i C_i$ with each $C_i$ cyclic of order $p$ and let $x_i$ be a generator for $H^2(C_i)$. Then the inflations of the $x_i$ form a basis for $B(A)$.

\begin{proposition}
	\label{pr:omegachar}
	Let $G$ be a $p$-group and $A$ one of its maximal elementary abelian subgroups. Then 
		$G$ is $\Omega$-extendible if and only if every element of $B(A)$ is the restriction of an element of $H^2(G)$.
\end{proposition}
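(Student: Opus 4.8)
The plan is to analyse both implications by passing to the preimage $\tilde A=\pi^{-1}(A)$ of $A$ in a central extension $1\to E\to F\xrightarrow{\pi}G\to 1$, and to use that, when $G$ is $p$-central, $\Omega_1(G)$ is an elementary abelian central subgroup, hence lies in $A$ by maximality, so every element of $G$ of order dividing $p$ is in $A$. I would prove the ``if'' direction first. Writing $A=\times_{i=1}^sC_i$ as in the statement, choose $b_i\in H^2(G)$ with $\res^G_A(b_i)$ the inflation of a generator of $H^2(C_i)$; the $b_i$ are the characteristic classes of a central extension $1\to E\to F\xrightarrow{\pi}G\to1$ with $E$ elementary abelian of rank $s$ and basis $e_i$. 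Restricting to $\tilde A$ turns the characteristic class $\sum_i b_i\otimes e_i$ into $\sum_i\res^G_A(b_i)\otimes e_i$, which is the class of the homocyclic group $\times_i\mathbb Z/p^2$ over $A$; hence $\tilde A\cong\times_i\mathbb Z/p^2$ and $\Omega_1(\tilde A)=\tilde A^p=E$. Granting that $G$ is $p$-central (discussed below), any element of $F$ of order dividing $p$ maps into $\Omega_1(G)\le A$, hence lies in $\tilde A$, hence in $\Omega_1(\tilde A)=E$, so $\Omega_1(F)=E$; and any element of $F$ of order dividing $p^2$ has its $p$-th power of order dividing $p$, hence in $\Omega_1(F)=E=\ker\pi$, so its image in $G$ has order dividing $p$, lies in $A$, and the element lies in $\tilde A$; thus $\Omega_2(F)\le\tilde A$, whence $\Omega_2(F)^p\le\tilde A^p=E$, while also $E=\Omega_2(\tilde A)^p\le\Omega_2(F)^p$. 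So $E=\Omega_1(F)=\Omega_2(F)^p$ and $G$ is $\Omega$-extendible.

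For the ``only if'' direction one starts the same way: from a witnessing $1\to E\to F\xrightarrow{\pi}G\to1$ with $E=\Omega_1(F)=\Omega_2(F)^p$ the same argument gives $\Omega_2(F)\le\tilde A$ and $\Omega_1(\tilde A)=\tilde A^p=E$ with $\tilde A/E\cong A$. The subtlety here is that these properties of $\tilde A$ do \emph{not} by themselves force $B(A)$ into $\res^G_A(\kappa_F(E^\ast))$, where $\kappa_F\colon E^\ast\to H^2(G)$ is the characteristic map: already for $G=(\mathbb Z/p)^2$ one can take $F$ to be a suitable nonabelian group of order $p^4$ with $E=\Omega_1(F)=\Omega_2(F)^p$ for which this fails, although the conclusion $B(A)\subseteq\res^G_A(H^2(G))$ still holds, trivially, because there $\res^G_A$ is the identity. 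So one cannot read the conclusion off an arbitrary witness; one must use more of $H^2(G)$ than the characteristic classes of $F$, or replace $F$ by a better-chosen extension. For $p$ odd the clean route is to invoke Weigel's cohomological characterisation of the $\Omega$-extension property \cite{weigel}, together with the coincidence of the $\Omega$-extension property and $\Omega$-extendibility at odd primes; the corrections needed to pass from a partial lift of an element of $B(A)$ to an honest restriction involve only cup products of degree-$1$ classes, which vanish on cyclic subgroups of order $p$.

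I expect two points to require the real work. First, the ``only if'' direction as just discussed, and dually the proof that in the ``if'' direction the hypothesis already forces $G$ to be $p$-central (which $\Omega$-extendibility demands): here one argues that a non-central element of order $p$ would, via Quillen's $F$-isomorphism theorem, produce a class in $B(A)$ that is not restricted from $H^2(G)$. Second, the prime $2$: there the power map of a central extension is quadratic rather than linear, and $H^2$ of an elementary abelian $2$-group is $S^2$ of its dual rather than $\Lambda^2$ of it together with the Bockstein image, so squaring must be tracked throughout --- which is precisely why the statement is phrased with $\Omega$-extendibility, i.e.\ with the condition $\Omega_2(F)^2=E$, rather than with the $\Omega$-extension property.
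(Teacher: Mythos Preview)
Your ``if'' direction is essentially the paper's argument: build the extension from classes $z_i\in H^2(G)$ restricting to a basis of $B(A)$, identify $\tilde A\cong(\mathbb Z/p^2)^n$, and use $p$-centrality to pull $\Omega_1(F)$ and $\Omega_2(F)$ down into $\tilde A$. The two gaps you flag, however, are real, and your sketches for filling them are not the paper's arguments and are, as stated, incomplete.

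\textbf{$p$-centrality in the ``if'' direction.} Your proposed route via Quillen's $F$-isomorphism is vague: it is not clear how a non-central element of order $p$ manufactures a class of $B(A)$ outside $\res^G_A H^2(G)$. The paper's argument is short and different. Since $B(A)$ lies in the image of restriction, it is fixed by the conjugation action of $N_G(A)$. But the Bockstein is a canonical isomorphism $H^1(A)\to B(A)$, so $N_G(A)$ acts trivially on $H^1(A)\cong\Hom(A,\mathbb F_p)$ and hence centralises $A$. A standard argument (as in \cite[2.3]{weigel}) then shows $A$ is central, so $G$ is $p$-central and $A=\Omega_1(G)$.

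\textbf{The ``only if'' direction.} You correctly observe that the characteristic classes of an arbitrary witness $F$ need not restrict into $B(A)$, and your $G=(\mathbb Z/p)^2$ example is apt. But you then stop, offering only an appeal to Weigel for odd $p$ and nothing for $p=2$. The paper's device works uniformly: split off a maximal elementary abelian direct factor, writing $G=K\times X$ with $\Omega_1(K)\le\Phi(K)$. For such $K$ one invokes \cite[1.3]{ms}, which says that the image of $\res\colon H^2(K)\to H^2(\Omega_1(K))$ already lies in $B(\Omega_1(K))$. Hence the characteristic classes of the induced witness for $K$ restrict into $B(\Omega_1(K))$; the preimage of $\Omega_1(K)$ is therefore abelian, and the condition $\Omega_2^{\,p}=\Omega_1$ forces it to be $(\mathbb Z/p^2)^m$, so the restricted classes form a basis of $B(\Omega_1(K))$. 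Inflating these and adjoining a basis of $B(X)$ gives the required classes for $G$. This reduction to the case $\Omega_1\le\Phi$, together with the input from \cite{ms}, is the missing idea; without it your outline does not close, particularly at $p=2$.
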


The proof is deferred to Section~\ref{se:misc}.

The following characterisation of groups that have cohomology ring isomorphic to that of some finite abelian group will be crucial. 

\begin{theorem}
\label{th:cohomchar}
For a finite $p$-group $G$, the following conditions are equivalent.
\begin{enumerate}
\item
$G$ is powerful and $\Omega$-extendible.
\item
There is a finite abelian $p$-group $A$ with $H^*(A) \cong H^*(G)$. More specifically, $H^*(G) \cong \Lambda[x_1, \ldots , x_m] \otimes \mathbb F_p[y_1, \cdots , y_n]$, where the $x_i$ have degree 1, the $y_i$ have degree at most 2 and $n=d(G)$. If $p$ is odd then $m=n$ and all the $y_i$ have degree 2; if $p=2$ then $m$ of the $y_i$ have degree 2, the rest having degree 1.
\item
(for $p$ odd only) $H^*(G)$ has a collection of parameters in degree 2 and the only relations in degree 2 are squares.
\end{enumerate}
\end{theorem}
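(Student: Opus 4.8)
The plan is to prove the three-way equivalence in Theorem~\ref{th:cohomchar} by establishing $(1)\Rightarrow(2)\Rightarrow(1)$, and then, for $p$ odd, $(1)\Rightarrow(3)\Rightarrow(1)$; the implication $(2)\Rightarrow(3)$ for $p$ odd will be immediate from inspection of the ring in $(2)$, since $\Lambda[x_1,\dots,x_n]\otimes\mathbb F_p[y_1,\dots,y_n]$ has the squares $x_i^2=0$ as its only degree-$2$ relations (note $\deg x_ix_j=2$) and the $y_i$ themselves form a collection of parameters in degree~$2$. The serious content is $(1)\Rightarrow(2)$ and the two reverse implications.

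For $(1)\Rightarrow(2)$: since $G$ is powerful, Proposition~\ref{pr:powerchar} tells us the only relations in $H^2(G)$ are squares, so the subring of $H^*(G)$ generated in degrees~$1$ and~$2$ looks like $\Lambda[x_1,\dots,x_m]\otimes\mathbb F_p[y_1,\dots,y_{n'}]$ for suitable $m,n'$ (with the usual caveat at $p=2$ where some degree-$1$ generators have no square relation and thus behave like polynomial generators). Here $d(G)=\dim H^1(G)$ counts the degree-$1$ generators. The $\Omega$-extendibility hypothesis, via Proposition~\ref{pr:omegachar}, says that every element of $B(A)\subseteq H^2(A)$ — where $A$ is a maximal elementary abelian subgroup — lifts to $H^2(G)$; since the restrictions of the Bockstein classes $x_i$ to $A$ generate $B(A)$ and $B(A)$ has rank equal to $\operatorname{rank}A$, this forces $H^2(G)$ to be large enough that the $y_i$ (the images of the Bocksteins of the degree-$1$ classes together with genuine degree-$2$ classes) restrict to a system of parameters on every maximal elementary abelian subgroup. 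The key step is then to run the argument of Lemma~\ref{le:combine}: these degree-$\le 2$ elements form a collection of parameters for $H^*(G)$, so $H^*(G)$ is finite over the polynomial subring they generate; combined with the relations-are-squares statement and a dimension count in low degrees (matching $\dim H^i$ against the abelian model in degrees where the parameters have not yet ``kicked in''), one concludes $H^*(G)$ is exactly the claimed tensor product, which is $H^*(A')$ for the abelian group $A'=(\mathbb Z/p)^{m}\times(\mathbb Z/p^2)^{n-m}$ at odd $p$ (and the analogous group at $p=2$). This matching argument — upgrading ``finite over a polynomial subring with only square relations in degree~$2$'' to ``equal to the abelian model'' — is where I expect the real work to lie, and it will presumably use that $H^*(G)$ is Cohen--Macaulay (the collection of parameters in degree~$\le 2$ being a regular sequence because the quotient is finite-dimensional) so that the Poincar\'e series is forced.

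For $(2)\Rightarrow(1)$: suppose $H^*(G)\cong H^*(A)$ for a finite abelian $p$-group $A$. The degree-$2$ relations of $H^*(A)$ are exactly the squares of the degree-$1$ generators, so by Proposition~\ref{pr:powerchar} $G$ is powerful. For $\Omega$-extendibility we use Proposition~\ref{pr:omegachar}: $H^*(A)$ has $B(\bar A)$ — the Bockstein image for a maximal elementary abelian $\bar A\le G$ — hit by restriction from $H^2(G)$, because in the abelian model every maximal elementary abelian subgroup of $G$ has its Bockstein classes already visible as (restrictions of) the polynomial generators $y_i$; more precisely, $p$-centrality of $G$ must be extracted first (a group whose cohomology ring is that of an abelian group has $\Omega_1$ central — this should follow from the detection of cohomology on $\Omega_1$ together with the structure of the ring), and then the lifting property for $B(A)$ is read off. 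Finally $(3)\Rightarrow(1)$ for $p$ odd: the ``only relations in degree~$2$ are squares'' clause is literally the powerful condition by Proposition~\ref{pr:powerchar}, and a collection of parameters in degree~$2$ restricts, on a maximal elementary abelian $A$, to a system of parameters in $H^2(A)=B(A)$ (at odd $p$, $H^2$ of an elementary abelian group is precisely the Bockstein image of $H^1$), which by a linear-algebra/degree argument forces all of $B(A)$ to be restrictions from $H^2(G)$, giving $\Omega$-extendibility via Proposition~\ref{pr:omegachar}. The main obstacle throughout is the low-degree bookkeeping at $p=2$, where degree-$1$ classes can be polynomial generators and the exterior/polynomial split in the abelian model is subtler; I would handle odd $p$ cleanly first and then treat $p=2$ by the same template with the extended list of generators.
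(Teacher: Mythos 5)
There is a genuine gap, and it is concentrated exactly where you say you ``expect the real work to lie.'' The implication $(1)\Rightarrow(2)$ is not something the paper proves at all: it is quoted from Minh--Symonds \cite[3.14]{ms} (see also \cite{weigel}, \cite{bp}), and it is a substantial theorem. Your proposed route does not close it. Knowing that $H^*(G)$ is finite over a subring generated in degrees $\le 2$ and that the only degree-$2$ relations are squares does not determine the ring: there could be further generators in higher degrees, or relations among the $y_i$ in degrees $>2$. Your supporting claims are also not available: a collection of parameters with finite-dimensional quotient is a regular sequence only if the ring is Cohen--Macaulay, which is here part of what needs proving, not a given; and even if you forced the Poincar\'e series to agree with the abelian model, equality of Hilbert series does not yield a ring isomorphism. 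The same citation is what the paper uses for $(2)\Rightarrow(1)$ (restricted to powerful $G$, with powerfulness supplied by Proposition~\ref{pr:powerchar}); your version of that direction has its own gap, namely that an abstract graded-ring isomorphism $H^*(G)\cong H^*(A)$ gives no control over the restriction maps to actual subgroups of $G$, so you cannot read off from the abelian model that the classes $y_i$ restrict onto $B(\bar A)$ for a maximal elementary abelian $\bar A\le G$.

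Your $(3)\Rightarrow(1)$ is close to the paper's argument but has two slips. First, for $A$ elementary abelian of rank $\ge 2$ and $p$ odd, $H^2(A)$ is \emph{not} $B(A)$: it is $\Lambda^2 H^1(A)\oplus B(A)$, and the correct statement is that $H^2(A)/(H^1(A))^2$ is canonically isomorphic to $B(A)$ (equivalently to $H^1(A)$) via the Bockstein. Second, knowing that the image of $\res\colon H^2(G)\to H^2(A)$ surjects onto $H^2(A)/(H^1(A))^2$ does not imply $B(A)$ itself lies in that image, so Proposition~\ref{pr:omegachar} cannot be invoked directly as you propose. The paper instead uses the surjection onto the quotient to show $N_G(A)$ acts trivially on $H^1(A)$, deduces that $A$ is central (so $G$ is $p$-central), and then takes classes $z_1,\dots,z_n\in H^2(G)$ restricting to a basis of $H^2(\Omega_1(G))/(H^1(\Omega_1(G)))^2$ and builds the central extension witnessing $\Omega$-extendibility directly, citing \cite[2.2]{ms}. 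You should either adopt that route or explain why the image of restriction contains all of $B(A)$, which is not automatic.
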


\begin{remark} It is possible that (3) is equivalent to the other conditions even when $p=2$. It has been verified for groups of order at most 64 using the calculations of Green and King \cite{gk}.
	\end{remark}

\begin{proof} The equivalence of (1) and (2) is shown in \cite[3.14]{ms}  for powerful $G$; see also \cite{weigel} for $p\geq 3$ and \cite{bp} for $p \geq 5$. On the other hand, any $p$-group satisfying (2) or (3) is powerful, by Proposition~\ref{pr:powerchar}.
	
Clearly (2) implies (3). We do not use (3), so we postpone the remainder of the proof to Section~\ref{se:misc}.
\end{proof}

\section{Subgroups}

The next theorem was conjectured by D\'{i}az, Garaialde and Gonz\'{a}lez in \cite[5.2]{dgg2}, though without explicit bound.

\begin{theorem}
	\label{th:sub}
	If $G$ is any $p$-group with the property that any characteristic subgroup can be generated by at most $r$ elements (e.g.\ if $G$ is of rank at most $r$) then $G$ has a characteristic subgroup $N$ of rank at most $r$ that is powerful and $\Omega$-extendible and of index at most $p^{r (\lceil \log_2r \rceil +2 +e)}$ (the logarithm is rounded up to an integer), where $e=0$ for $p$ odd and $e=1$ for $p=2$.
	\end{theorem}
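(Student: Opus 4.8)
The plan is to build $N$ as the bottom of an explicit chain of characteristic subgroups of $G$
\[
G = N_0 \ \geq\ N_1\ \geq\ \cdots\ \geq\ N_t = N,
\]
where each $N_{i+1}$ is produced from $N_i$ by a \emph{canonical} operation (my first choice is the Frattini subgroup, $N_{i+1}=\Phi(N_i)$, possibly mixed with power subgroups), so that $N_{i+1}$ is again characteristic in $G$ and $N_i/N_{i+1}$ is elementary abelian. Since $N_i$ is characteristic in $G$, the hypothesis forces $d(N_i)\leq r$, hence $N_i/N_{i+1}=N_i/\Phi(N_i)$ has rank at most $r$ and $|N_i:N_{i+1}|\leq p^{r}$; consequently $|G:N|\leq p^{rt}$, and everything comes down to achieving $t\leq \lceil\log_2 r\rceil+2+e$ while making $N$ powerful and $\Omega$-extendible. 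The rank assertion in the conclusion is then free: once $N$ is powerful and characteristic in $G$, the standard fact that a powerful $p$-group $P$ satisfies $\operatorname{rank}(P)=d(P)$ upgrades ``$d(N)\leq r$'' to ``$\operatorname{rank}(N)\leq r$''. This is also why the weaker hypothesis (characteristic subgroups $\leq r$-generated, rather than $\operatorname{rank}(G)\leq r$) is enough: the only quantity we ever feed in is the number of generators of characteristic subgroups we have constructed.

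\emph{Reaching a powerful subgroup in $\lceil\log_2 r\rceil$ steps} is the heart of the matter, and the step I expect to be the main obstacle. The idea is to iterate the Frattini operation and track a ``defect from powerfulness'': for a $p$-group $M$ let $c(M)$ be the least $c$ with $\gamma_{c+1}(M)\leq M^{p}$ (with $M^{4}$ in place of $M^{p}$ when $p=2$), so that, by the definition of powerful (cf.\ Proposition~\ref{pr:powerchar}), $M$ is powerful exactly when $c(M)=1$, and $c(M)<\infty$ by nilpotency. Two things must be shown. First, that the rank hypothesis bounds the initial defect, $c(G)\leq r$ (or a small function of $r$) — here one uses that the lower central factors $\gamma_i(G)$ are characteristic, hence $\leq r$-generated, to control how far $G$ is from being powerful in terms of the widths of its layers. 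Second, a halving estimate: $c(\Phi(M))\leq\lceil c(M)/2\rceil$, proved from the standard commutator–power identities in a $p$-group (Hall–Petrescu / the collection formula), since passing to $\Phi(M)=[M,M]M^{p}$ roughly squares the relevant $p$-powers and so contracts the lower central series. Iterating, $\Phi_{\lceil\log_2 r\rceil}(G)$ is powerful. Getting these two estimates clean, uniformly in $p$ and with the genuinely different behaviour at $p=2$ (where ``powerful'' only controls $[M,M]\leq M^{4}$), is exactly what pins down the numerical constants.

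Finally, \emph{upgrading a powerful characteristic subgroup $P$ to one that is also $\Omega$-extendible} — which by definition includes being $p$-central — should take at most two further canonical steps, plus one more when $p=2$ (this is the $+e$). Because $P$ is powerful it has a regular power structure ($|P:P^{p}|=|\Omega_1(P)|$, $\Omega_1(P^{p})=\{x\in P:x^{p^2}=1\}^{p}$, and so on), and one checks that one further power/Frattini step drives $\Omega_1$ into the centre, giving $p$-centrality at the cost of another factor $p^{r}$. For $\Omega$-extendibility I would invoke the cohomological criterion of Proposition~\ref{pr:omegachar}: with $A$ a maximal elementary abelian subgroup of the current group $N$, one must lift every class in the Bockstein image $B(A)\leq H^2(A)$ to $H^2(N)$; equivalently, construct the central extension $1\to E\to F\to N\to 1$ with $E=\Omega_1(F)=\Omega_2(F)^{p}$ directly, for which the natural ``uniform cover'' of a powerful $p$-central group does the job — and then Theorem~\ref{th:cohomchar} even identifies $H^*(N)$ with the cohomology of a finite abelian group. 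The prime $2$ needs one extra power step because there the square/Bockstein discrepancy costs an additional layer. Adding up: $\lceil\log_2 r\rceil$ steps to powerful, $+1$ for $p$-central, $+1$ for $\Omega$-extendible, $+e$ for $p=2$, each of index at most $p^{r}$, yields $|G:N|\leq p^{r(\lceil\log_2 r\rceil+2+e)}$.
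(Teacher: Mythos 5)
There are genuine gaps, concentrated exactly where you flag uncertainty. For the first stage (a powerful characteristic subgroup of index at most $p^{r\lceil\log_2 r\rceil}$), the paper does not iterate Frattini subgroups: it takes $V$ to be the intersection of the kernels of all homomorphisms $G\to\Gl_r(\mathbb F_p)$ and cites Lubotzky--Mann \cite{LM} and \cite{DDMS} for the fact that $V$ is powerful with that index bound. Your substitute rests on two unproved claims --- the initial bound $c(G)\leq r$ on the ``defect from powerfulness'' and the halving estimate $c(\Phi(M))\leq\lceil c(M)/2\rceil$ --- neither of which is standard, and the second of which is delicate precisely because $\Phi(M)=M^p[M,M]$ contains $M^p$, whose commutator structure is not controlled by the simple inclusion $\gamma_k(\gamma_2(M))\leq\gamma_{2k}(M)$; the Hall--Petrescu correction terms are exactly what the linear-algebra route of King/Lubotzky--Mann is designed to avoid. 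At minimum this step should be replaced by the citation; as written it is a conjecture, not a proof.

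The more serious gaps are in the upgrade to $p$-central and $\Omega$-extendible, which is where the theorem's new content lies. You assert that ``one further power/Frattini step drives $\Omega_1$ into the centre,'' but this is not a formal consequence of powerfulness: the paper proves that $H=V^p$ is $p$-central by an induction on $\Omega_1(H/H^{p^i})$ that crucially uses the \emph{defining property of $V$} --- that $V$ acts trivially on every elementary abelian characteristic section of rank at most $r$, so that conjugation by $v^p$ fixes $\Omega_1(H/H^{p^{i+1}})$. That leverage is unavailable if your powerful subgroup arises from Frattini iteration. Similarly, $\Omega$-extendibility is not obtained from a ``natural uniform cover'': the paper passes to a \emph{second} power subgroup $N=H^p$ and proves that $B(\Omega_1(N))$ lifts to $H^2(N)$ (the criterion of Proposition~\ref{pr:omegachar}) by a comparison of three Lyndon--Hochschild--Serre spectral sequences --- integral coefficients to kill $d_2$, and the spectral sequence for $\Omega_1(H)\unlhd H$ together with the fact that $H/\Omega_1(H)$ is powerful and $\Omega$-extendible (Lemma~\ref{le:omega}, Theorem~\ref{th:cohomchar}), hence has $H^3$ decomposable, to kill $d_3$. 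Your count of steps ($\lceil\log_2 r\rceil$ plus two power steps, plus the extra layer at $p=2$ coming from replacing $V,H,N$ by their squares) does match the paper's bookkeeping, but the two lifting arguments that justify those two extra steps are missing, and they are the heart of the proof.
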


Without the $\Omega$-extendible condition, a version of this result was proved by Lubotzky and Mann \cite{LM}, based on work of B.W.~King. See also the treatments of Dixon et al.\ \cite{DDMS} and Khukhro \cite{khukhro}.

\begin{proof} First we present a proof for odd primes $p$, then we show how it can be adapted when $p=2$.
	
Let $V <G$ be the intersection of the kernels of all the homomorphisms from $G$ to $\Gl_r(\mathbb F_p)$.  It is shown in \cite[1.14]{LM} and \cite[2.13]{DDMS} that $V$ is powerful and of index bounded by $p^{r \lceil \log_2r \rceil}$. By hypothesis, $V$ is generated by at most $r$ elements so, being powerful, it has rank at most $r$ \cite[11.18]{khukhro}.

Set $H=V^p$; then $H$ is powerful \cite[11.6]{khukhro} and has index at most $p^r$ in $V$, since $V/H$ is elementary abelian \cite[11.10]{khukhro}. Now we show that $H$ is $p$-central; we do this by proving by induction on $i$ that $\Omega_1(H/H^{p^i})$ is invariant under conjugation by $H$.
Clearly this is true for $i=0$; assume that, for some $i$, $\Omega_1(H/H^{p^i})$ is invariant under conjugation by $H$, so it is elementary abelian of rank at most $r$. The kernel of the natural homomorphism $f \! :\Omega_1(H/H^{p^{i+1}}) \rightarrow \Omega_1(H/H^{p^i})$ is $H^{p^i}/H^{p^{i+1}}$, which is elementary abelian of rank at most $r$ since $H$ is powerful \cite[11.10]{khukhro}. The action of $V$ by conjugation on both $\Ker(f)$ and $\Ima(f)$ is trivial, by definition of $V$. Thus all that conjugation by $v \in V$ can do to $x \in \Omega_1(H/H^{p^{i+1}})$ is to send it to $xy$, where $y \in H^{p^i}/H^{p^{i+1}}$ so $y$ is centralised by $H$. It follows that conjugation by $v^p$ fixes $x$, hence $\Omega_1(H/H^{p^{i+1}})$ is invariant under conjugation by $H$. 

Set $N=H^p$; then $N$ is powerful \cite[11.6]{khukhro} and $p$-central and has index at most $p^r$ in $H$, since $H/N$ is elementary abelian. We will prove that $N$ is $\Omega$-extendible by showing that $B(\Omega_1(N))$ is in the image of restriction from $H^2(N)$ and invoking Proposition~\ref{pr:omegachar}. 

Consider the Lyndon-Hochschild-Serre spectral sequences for the inclusion $\Omega_1(N) \unlhd N$ 
with both $\mathbb F_p$ and $\mathbb Z$ coefficients and also for $\Omega_1(H) \unlhd H$
with $\mathbb F_p$ coefficients. We denote their terms by $E_*^{*,*}(N)$, $E_*^{*,*}(N; \mathbb Z)$ and $E_*^{*,*}(H)$. There are canonical comparison maps $\theta \! : E_*^{*,*}(N;\mathbb Z) \rightarrow E_*^{*,*}(N)$ and $\phi \! : E_*^{*,*}(H) \rightarrow E_*^{*,*}(N)$. We need to show that $B(\Omega_1(N)) \subseteq H^2(\Omega_1(N))=E^{0,2}_2(N)$ survives to $E^{0,2}_\infty(N)$.

We have $d_2(B(\Omega_1 (N))) = d_2\theta (E_2^{0,2}(N;\mathbb Z))=\theta d_2 (E_2^{0,2}(N;\mathbb Z))$. But $d_2 (E_2^{0,2}(N;\mathbb Z)) \subseteq E_2^{2,1}(N;\mathbb Z) \cong H^2(N/\Omega_1(N);H^1(\Omega_1(N); \mathbb Z)) =0$, so $B(\Omega_1 (N))$ survives to $E_3^{0,2}(N)$. Likewise, $B(\Omega_1 (H))$ survives to $E_3^{0,2}(H)$. The inclusion $\Omega_1(N) \leq \Omega_1(H)$ is split, so  $\phi$ maps $B(\Omega_1 (H))$ onto $B(\Omega_1 (N))$ and $d_3 (B(\Omega_1(N))) = d_3 \phi (B(\Omega_1 (H))) = \phi d_3 (B(\Omega_1 (H))) \subseteq \phi (E_3^{3,0}(H))$.

Notice that $H/\Omega_1(H)$ satisfies the definition of the $\Omega$-extension property, so by Lemma~\ref{le:omega} it is $\Omega$-extendible and as a quotient of a powerful group it is powerful. By Theorem~\ref{th:cohomchar} its cohomology is generated in degrees 1 and 2, so $H^3(H/\Omega_1(H)) = H^1(H/\Omega_1(H)) \cdot H^2(H/\Omega_1(H))$. But $\phi H^1(H/\Omega_1(H)) =0$, since $H^1(-) \cong \Hom(-, \mathbb F_p)$, thus $\phi H^3(H/\Omega_1(H)) =0$ and hence $\phi E_3^{3,0}(H)=0$. All subsequent differentials are clearly 0 on $E_*^{0,2}(H)$, so $B(\Omega_1 (N))$ survives.

For $p=2$ the proof is similar and we just note the differences. We replace $V$, $H$ and $N$ by $V'=V^2$, $H'=H^2$ and $N'=N^2$. It is shown in \cite[4.1.14]{LM} and \cite[2.13]{DDMS} that $V'$ is powerful and of index bounded $p^{r( \lceil \log_2r \rceil+1)}$. By the same method as before, we show that $\Omega_2(H')$ is central in $H'$ and thus abelian, hence $H'/\Omega_1(H')$ is $2$-central. Write $\Omega_2(H') \cong (\mathbb Z/4)^a \times (\mathbb Z/2)^b$ and let $X= (\mathbb Z/2)^b$. The extension $1 \rightarrow (\mathbb Z/2)^a \rightarrow H'/X \rightarrow H'/\Omega_1(H')) \rightarrow 1$ shows that $H'/\Omega_1(H')$ is $\Omega$-extendible. Note that $(\mathbb Z/2)^a = \Omega_1(H'/X)$, hence $\Omega_2(H'/X)=\Omega_2(H')/X$. The rest of the proof is the same.
\end{proof}

\section{Parameters}
\label{sec:para}

We are going to use Proposition~\ref{pr:par} with $\{ G_{\lambda} \}$ the set of $p$ groups of rank $r$ up to isomorphism. Lemma~\ref{le:dimbound} shows that the second condition is satisfied by taking $H_\lambda$ to be the subgroup of $G_\lambda$ from Theorem~\ref{th:sub}. Theorem~\ref{th:cohomchar} shows that we can take the number $U(i)$ to be $\binom{i+r-1}{i}$.

For the first condition we use Lemma~\ref{le:combine} to produce a collection of parameters. There are only finitely many possibilities for the isomorphism class of $G_\lambda/H_\lambda$, since its order is bounded, so we choose a collection of parameters for each one. Altogether this is a finite set, so there is some bound on the number of parameters and on their degrees. In fact, these bounds can be made explicit. One way of producing parameters is to find a faithful representation of the group and take its Chern classes; the proof of \cite[10.2]{Sy1} shows that this yields a crude bound of $|G_\lambda/H_\lambda|$ on the number of parameters and $2|G_\lambda/H_\lambda|$ on their degrees. The remaining problem is to find a collection of $G_\lambda$-parameters for $H_\lambda$ with the degrees bounded independently of $\lambda$.

Reverting to the notation of the previous section, $G$ has a characteristic subgroup $N$ of bounded index that is powerful and $\Omega$-extendible. Thus $\Omega_1(N)$ is elementary abelian and there are elements $x_1, \ldots , x_n \in H^2(N)$ such that $\res_{\Omega_1(N)}x_1, \ldots , \res_{\Omega_1(N)}x_n$ form a basis for $B(\Omega_1(N))$. Note that $n$ is at most the rank of $G$.

Let $c_{n,0}, \ldots , c_{n,n-1}$ be the Dickson polynomials over $\mathbb F_p$ in the $x_i$ (see e.g.\ \cite[8.1]{Benson}) and set $d_i= \mathcal N ^G_N (c_{n,i-1})$, where $\mathcal N$ denotes the Evens norm map. We claim that $d_1, \ldots , d_n$ is a collection of $G$-parameters for $H^*(N)$; it is sufficient to check that it forms a collection of $G$-parameters for $H^*(\Omega_1(N))$. On applying the Mackey formula to $\res^G_{\Omega_1(N)} \mathcal N ^G_N (c_{n,i-1})$ we obtain a product of conjugates of  $\res^N_{\Omega_1(N)} c_{n,i-1}$ by elements of $G$. But the $\res ^N_{\Omega_1(N)}c_{n,i-1}$ are naturally the images of the Dickson polynomials in the polynomial ring $\mathbb F_p [ B(\Omega_1(N))]$, on which $G$ acts via conjugation. The latter Dickson polynomials are certainly invariant under $G$ and form a system of parameters. Thus $\res^G_{\Omega_1(N)} (d_i)$ is a power of $c_{n,i-1}$ and so the $\res^G_{\Omega_1(N)} (d_i)$ form a system of parameters for $H^*(\Omega_1(N))$.

The $d_i$ are bounded in degree by $2(p^n-1)[G:N]$. This completes the proof of Theorem~\ref{th:main}.

\section{Miscellaneous Results}
\label{se:misc}

This section contains the proofs of sundry results that were relegated here so as not to encumber the main exposition.

\begin{proposition}
	\label{pr:rank}
	There is a function $F(p,r)$ such that any $p$-group of coclass $r$ has rank at most $F(p,r)$.
	\end{proposition}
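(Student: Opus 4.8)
The plan is to reduce to the structure theory of the coclass classification, as developed by Leedham-Green and Shalev. The key fact is that a $p$-group $G$ of coclass $r$ has a normal subgroup $T$, of index bounded by a function of $p$ and $r$, such that $T$ is uniformly powerful of bounded rank; indeed, by Leedham-Green's theorem, $G$ has a normal subgroup which is a maximal class subgroup of a suitable uniserial action, and more concretely there is a bound $f(p,r)$ and a normal subgroup $T \unlhd G$ with $[G:T] \leq p^{f(p,r)}$ and $T$ powerful with $d(T) \leq g(p,r)$ for explicit functions $f,g$. (See \cite{LG}, especially the coclass theorems; the relevant statement is sometimes phrased as: the $p$-adic space groups of coclass $r$ form finitely many families, and every $p$-group of coclass $r$ is a quotient of one of these, with a translation subgroup of bounded rank.)

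Granting such a $T$, I would argue as follows. Since $T$ is powerful, its rank equals $d(T)$, so $\rank(T) \leq g(p,r) =: s$. Now let $K \leq G$ be any subgroup. Then $K \cap T$ has index at most $[G:T] \leq p^{f(p,r)}$ in $K$, so $d(K) \leq d(K \cap T) + f(p,r)$ (each time one passes to a subgroup of index $p$ the minimal number of generators can increase by at most $1$, by considering Frattini quotients). Moreover $K \cap T \leq T$, and since $T$ is powerful, every subgroup of $T$ has rank at most $\rank(T) = s$; hence $d(K \cap T) \leq s$. Therefore $d(K) \leq s + f(p,r)$ for every subgroup $K$ of $G$, which says exactly that $\rank(G) \leq s + f(p,r) =: F(p,r)$, a function of $p$ and $r$ only.

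The main obstacle is simply invoking the correct form of the coclass structure theorem: one needs a normal subgroup of bounded index that is powerful (or uniform) of bounded rank, with both bounds depending only on $p$ and $r$. This is exactly the content of the coclass theorems (the solution of the Leedham-Green--Newman conjectures); for instance it follows from the fact that every pro-$p$ group of finite coclass is $p$-adic analytic of bounded dimension, and a pro-$p$ group is $p$-adic analytic iff it has an open uniform subgroup, whose dimension (hence rank) is an invariant; and in the coclass-$r$ case this dimension, as well as the index of the uniform subgroup, is bounded in terms of $p$ and $r$. Passing from the pro-$p$ statement to the finite statement is routine since every finite $p$-group of coclass $r$ is a quotient of a pro-$p$ group of coclass $r$ by a term of its lower central series, and the image of the uniform subgroup still has bounded index and bounded rank.

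A remark on sharpness: one does not need the full strength of these deep theorems if one is willing to let $F$ be non-explicit, since for the application to Carlson's conjecture only the existence of $F(p,r)$ matters; but the argument above gives an explicit $F$ once explicit forms of $f$ and $g$ are supplied from the literature. Either way, combining this with Theorem~\ref{th:main} yields Carlson's conjecture, as already noted after the statement of that theorem.
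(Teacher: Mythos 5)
Your reduction is sound and, at bottom, takes the same route as the paper: both arguments funnel everything through the coclass structure theory and then finish with the elementary facts that a subgroup of index $p^j$ costs at most $j$ extra generators and that a powerful group generated by $s$ elements has rank at most $s$ (\cite[11.18]{khukhro}). The packaging differs: the paper invokes Leedham-Green's structure theorem \cite[11.3.9]{LG}, giving a normal subgroup $N$ of order bounded in terms of $p$ and $r$ with $G/N$ constructible, and then bounds the rank of each layer of the constructible group (two abelian sections arising as quotients of a uniserial lattice of bounded rank \cite[7.4.13]{LG}, plus a quotient of bounded order), using subadditivity of rank on short exact sequences; you instead posit a single powerful normal subgroup of bounded index with boundedly many generators. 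Your deduction from that input is correct, but the input itself is where the real content lies, and your justification of it is imprecise in one place: the claim that ``every finite $p$-group of coclass $r$ is a quotient of a pro-$p$ group of coclass $r$ by a term of its lower central series'' is not literally true if the pro-$p$ group is meant to be one of the finitely many infinite ones (and is vacuous otherwise). The correct statement is that, with finitely many exceptions, $G$ has a normal subgroup $K=\gamma_i(G)$ of order bounded in terms of $p$ and $r$ such that $G/K$ is such a quotient. This does not sink the argument --- $K$ contributes at most $\log_p|K|$ to the rank, exactly as in your index step --- but as written your subgroup $T$ is only guaranteed to exist in $G/K$, not in $G$; you must either pull it back and absorb $K$, or cite a reference that literally supplies, in $G$ itself, a powerful normal subgroup of bounded index and boundedly many generators.
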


This result appears to be known to coclass aficionados, but we are not aware of a proof in the literature.
 
 \begin{proof}
 	We sketch the proof for odd $p$; the proof for $p=2$ is similar but the theorems cited change accordingly. We employ Leedham-Green's Structure Theorem for $p$-groups \cite[11.3.9]{LG}. This states that, with finitely many exceptions (which does not matter for this proof), any $p$-group of coclass $r$ possesses a normal subgroup $N$ of order bounded in terms of $p$ and $r$ such that $P/N$ is constructible. The definition of constructible involves many things and we summarise the relevant ingredients here \cite[8.4.3, 8.4.9]{LG}. 
 	
 	There are normal subgroups $L \leq M$ of $P/N$ with $L$ and $M/L$ abelian and we set $Q=(P/N)/M$. There is a uniserial $\hat{\mathbb Z}_pQ$-lattice $T$ with $\hat{\mathbb Z}_pQ$-sublattices $U \leq V \leq T$ such that $L \cong V/U$ and $M/L \cong T/V$ as $\hat{\mathbb Z}_pQ$-modules. There is an extension $1 \rightarrow T \rightarrow R \rightarrow Q \rightarrow 1$ compatible with the action of $Q$ on $T$ and $R$ has coclass at most $r$.
 	
 	By \cite[7.4.13]{LG}, the rank of $T$ as a lattice is bounded in terms of $p$ and $r$. From the first paragraph of the proof of \cite[11.3.9]{LG} we see that, in the all but finitely many cases considerd there, $|Q|$ is bounded in terms of $p$ and $r$. We have bounded the ranks of $N$, $L$, $M/L$ and $P/M$; rank is subadditive on short exact sequences, so we are done.
 	\end{proof}

 \begin{proof}[Proof of Theorem~\ref{th:gt}.]
 	Take the normal subgroup $N$ from Theorem~\ref{th:sub} and estimate the dimensions of the entries on the $E_2$-term of the Lyndon-Hochschild-Serre spectral sequence, as in the proof of Lemma~\ref{le:dimbound}. From Theorem~\ref{th:cohomchar} we obtain $\dim H^j(N) \leq \binom{j+r-1}{j}$ and the last part of Lemma~\ref{le:dimbound} shows that $\dim H^k(G/N) \leq \binom{r(\lceil \log _2 r \rceil +2+e) +k-1}{k}$. Now sum along the diagonals, This is easier to do if we observe that we are bounding the respective Hilbert series by the Hilbert series $(1-t)^{-r}$ and $(1-t)^{-r(\lceil \log _2 r \rceil +2+e)}$ then multiplying the Hilbert series.
 	\end{proof}
 
 \begin{proof}[Proof of Theorem~\ref{thm:gt2}.]
 Take the collection of parameters for $H^*(G)$ produced in Section~\ref{sec:para}; call them $d_1, \ldots , d_m$. It is bounded in number and its elements are bounded in degree by functions of $p$ and $r$. We can take suitable powers $e_i=d_i^{\alpha_i}$ of these parameters so that they all lie in the same degree $D$. The product of the degrees of all the parameters is a possible value for $D$, and it is bounded by a function of $p$ and $r$. We can now take a Noether normalization of $H^*(G)$ as a $\mathbb F_p[e_1, \ldots , e_m]$-module in such a way that the new parameters $f_1, \ldots , f_a$ are linear combinations of the old ones \cite[13.3]{eisenbud}. This might require a field extension $\mathbb F_p <k$, but that will not affect $\dim_k H^i(G;k)$. We know that there are precisely $a$ new parameters because $a$ is the Krull dimension of $H^*(G)$, by Quillen \cite[7.8]{quillen}. Because $\reg H^*(G)=0$ \cite[0.2]{Sy1}, we see from \cite[2.1]{Sy2} that $H^*(G)$ is generated as a module over $k[f_1, \ldots ,f_a]$ in degrees at most $\sum_i (\deg (f_i)-1)=a(D-1)$.
 
 Finally, Theorem~\ref{th:gt} shows that $\dim \oplus_{i=0}^{i=a(D-1)} H^i(G)$ is bounded by a function of $p$ and $r$. The theorem follows.
 	\end{proof}
 
 All the bounds in this proof can easily be written down, yielding an explicit but unwieldy formula for $X(p,r)$.
 
 If we had a result that bounded the regularity of the cohomology of a virtual Poincar\'{e} duality pro-$p$ group then Corollary~\ref{cor:pro-p} could be proved in the same way as Theorem~\ref{th:main}.

 \begin{proof}[Proof of Corollary~\ref{cor:pro-p}.]
 Let $G$ be a pro-$p$ group with rank bounded by $r$. By \cite[4.3]{DDMS}, $G$ has an open normal subgroup $U$ that is uniformly powerful. By a result of Lazard (cf. \cite[3.11]{ms}), $H^*(U)$ is the exterior algebra on $H^1(U)$, so has bounded dimension. A spectral sequence argument now shows that $H^*(G)$ is a noetherian ring \cite[13.5]{quillen}. Recall that $H^i(G)$ is the direct limit of the $H^i(G/N)$ as $N$ runs through the open normal subgroups of $G$ and observe that a bound on the rank of $G$ is inherited by the $G/N$.
 
 Since $H^*(G)$ is noetherian, there is an $N_j$ such that inflation $\inf^{G/N_j}_G \! : H^*(G/N_j)  \rightarrow H^*(G)$ is onto; the pro-$p$ version of Theorem~\ref{th:gt} thus follows from the original one. There must also be an $N_k \leq N_j$ such that $\inf_{G/N_k}^G \! : \Ima (\inf^{G/N_j}_{G/N_k} \! :H^*(G/N_j) \rightarrow H^*(G/N_k)) \rightarrow H^*(G)$ is an isomorphism. There are only finitely many possibilities for $H^*(G/N_j)$ and $H^*(G/N_k)$ as rings, by Theorem~\ref{th:main}; the inflation maps are determined by the images of the generators, so there are only finitely many of them, hence finitely many images.
 \end{proof}
	
\begin{lemma}
	\label{le:pcent}
	For $p$ odd, the condition that $G$ be $p$-central in the definition of the $\Omega$-extension property is redundant. In other words, it is a consequence of the other condition.
	\end{lemma}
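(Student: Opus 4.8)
The plan is to unwind the definitions and reduce to a purely group-theoretic statement about $p$-central groups. By definition $G$ has the $\Omega$-extension property precisely when $G$ is $p$-central \emph{and} there is a $p$-central group $H$ with $G\cong H/\Omega_1(H)$; so what the lemma asserts is that, for $p$ odd, the second condition already forces the first. I would therefore aim to prove: if $p$ is odd and $H$ is a $p$-central $p$-group, then $H/\Omega_1(H)$ is again $p$-central. The first thing to note is that $\Omega_1(H)$ is elementary abelian --- its defining generators have order dividing $p$ and lie in the centre $Z(H)$, so they pairwise commute, making $\Omega_1(H)$ abelian of exponent $p$.

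Next I would pin down which subgroup of the quotient has to be shown central. Because $\Omega_1(H)$ has exponent $p$, a coset $x\Omega_1(H)$ of order at most $p$ satisfies $x^p\in\Omega_1(H)$ and hence $x^{p^2}=1$, so $x\in\Omega_2(H)$; conversely the generators of $\Omega_2(H)$ have order dividing $p^2$ and map to elements of order at most $p$. Thus $\Omega_1\bigl(H/\Omega_1(H)\bigr)=\Omega_2(H)/\Omega_1(H)$, and the goal becomes $[\Omega_2(H),H]\le\Omega_1(H)$. Since $\Omega_2(H)$ is generated by elements of order dividing $p^2$ and $\Omega_1(H)$ is central, the identity $[uv,g]=[u,g]^{v}[v,g]$ reduces this to showing $[x,g]^p=1$ whenever $x$ has order dividing $p^2$ and $g\in H$ (and then $[x,g]\in\Omega_1(H)$, since $\Omega_1(H)$ contains every element of order dividing $p$). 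Writing $[x,g]=x^{-1}x^{g}$, with $x^{-1}$ and $x^{g}$ of order dividing $p^2$ and $x^p\in\Omega_1(H)\le Z(H)$, this in turn would follow from the power identity $(ab)^p=a^pb^p$ for $a,b$ of order dividing $p^2$: granting it, $[x,g]^p=(x^{-1})^p(x^g)^p=x^{-p}(x^p)^g=x^{-p}x^p=1$.

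The crux, which I expect to be the main obstacle, is this last power identity on $\Omega_2(H)$. The Hall--Petrescu collection formula gives $(ab)^p=a^pb^p\prod_{i=2}^{p}c_i^{\binom{p}{i}}$ with $c_i\in\gamma_i(\langle a,b\rangle)$, and for $p$ odd each $\binom{p}{i}$ with $0<i<p$ is divisible by $p$, so the intermediate correction terms vanish as soon as the relevant iterated commutators are known to have order dividing $p$. Making this precise requires controlling the nilpotency behaviour of $\Omega_2(H)$, and the economical route is to invoke the structure theory of $p$-central groups at odd primes, where the $\Omega$-series descends as fast as possible: one has $\Omega_i(H)=\{x\in H:x^{p^i}=1\}$ and each $\Omega_i(H)/\Omega_{i-1}(H)$ is central in $H/\Omega_{i-1}(H)$, the case $i=2$ being exactly what is needed (see \cite{ms}, \cite{weigel}). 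I would emphasise that oddness of $p$ is essential rather than cosmetic here: at $p=2$ the Hall--Petrescu term $c_2=[b,a]$ occurs with exponent $\binom{2}{2}=1$ and is not killed, and the lemma genuinely fails --- for instance $H=Q_{16}$ is $2$-central while $H/\Omega_1(H)\cong D_8$ is not.
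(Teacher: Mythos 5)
Your proposal is essentially correct and ends up in the same place as the paper, which disposes of this lemma by a single citation: the fact that $H/\Omega_1(H)$ is again $p$-central when $p$ is odd and $H$ is $p$-central is exactly what is ``shown at the beginning of the proof of \cite[III.12.2]{huppert}''. Your preliminary reductions are all sound and add genuine value as exposition: $\Omega_1(H)$ is elementary abelian, $\Omega_1\bigl(H/\Omega_1(H)\bigr)=\Omega_2(H)/\Omega_1(H)$, and the goal is $[\Omega_2(H),H]\le\Omega_1(H)$, reduced via $[uv,g]=[u,g]^v[v,g]$ to elements of order dividing $p^2$. Your $p=2$ counterexample ($Q_{16}\mapsto D_8$) is precisely the example the paper records after the lemma. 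Two caveats about the middle of your argument, though neither sinks it since your final appeal to the literature is legitimate and is the same move the paper makes. First, the Hall--Petrescu route is not closed as written: the divisibility $p\mid\binom{p}{i}$ only handles $2\le i\le p-1$, while the term $c_p^{\binom{p}{p}}=c_p\in\gamma_p(\langle a,b\rangle)$ survives with exponent $1$ and nothing you say controls it (nor the orders of the $c_i$ for the intermediate terms). Second, the rescue you propose --- citing that $\Omega_i(H)/\Omega_{i-1}(H)$ is central in $H/\Omega_{i-1}(H)$ for $p$-central $H$ at odd $p$ --- is, as you yourself note, exactly the statement being proved in the case $i=2$, so the detour through the power identity $(ab)^p=a^pb^p$ is circular rather than a proof; if you intend to cite that structure theory, you should cite it directly for the goal and delete the Hall--Petrescu paragraph, and if you intend a self-contained proof, the $c_p$ term still needs an argument.
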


\begin{proof}
	This is shown at the beginning of the proof of \cite[III.12.2]{huppert}.
	\end{proof}

\begin{example}
If $Q$ is a generalised quaternion 2-group then $Q/\Omega_1(Q)$ is a dihedral group, which is not $p$-central if it has order greater than 4.
\end{example}
		
\begin{lemma}
	\label{le:omega}
	For $p$ odd, a $p$-group has the $\Omega$-extension property if and only if it is $\Omega$-extendible.
	\end{lemma}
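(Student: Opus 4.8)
The plan is to show the two implications separately, using the cohomological characterisation of $\Omega$-extendibility in Proposition~\ref{pr:omegachar} together with the fact (Lemma~\ref{le:pcent}) that for $p$ odd the ``$p$-central'' clause in the definition of the $\Omega$-extension property is automatic. One direction is essentially trivial: if $G$ is $\Omega$-extendible via a central extension $1 \to E \to F \to G \to 1$ with $E = \Omega_1(F) = \Omega_2(F)^p$, then $F$ is $p$-central (since $\Omega_1(F) = E$ is central) and $G \cong F/\Omega_1(F)$, so $G$ has the $\Omega$-extension property. This is already remarked after the definition in the excerpt, so the substance is the converse.

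For the converse, suppose $G$ is $p$-central and $G \cong H/\Omega_1(H)$ for some $p$-central $p$-group $H$. I want to produce a central extension witnessing $\Omega$-extendibility. The natural candidate is $F = H/\Omega_2(H)^p$, with $E = \Omega_1(H)/\Omega_2(H)^p$ (note $\Omega_2(H)^p \leq \Omega_1(H)$ for $p$ odd, since $p$-th powers of elements of order dividing $p^2$ have order dividing $p$). One then has $F/E \cong H/\Omega_1(H) \cong G$, and the extension is central because $\Omega_1(H)$ is central in $H$ ($H$ being $p$-central). The two nontrivial points to verify are $E = \Omega_1(F)$ and $E = \Omega_2(F)^p$. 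For the first, $\Omega_1(F) \geq E$ is clear since $E$ is elementary abelian; for the reverse, if $xF^{?}$... more precisely, an element of $\Omega_1(F)$ is represented by some $x \in H$ with $x^p \in \Omega_2(H)^p$, and one must argue $x \in \Omega_1(H)$ — this is where $p$-centrality of $H$ and a standard regular-power argument for odd $p$ (e.g. that $x \mapsto x^p$ behaves well modulo the relevant subgroup) enters. For $E = \Omega_2(F)^p$: $\Omega_2(F)$ is the image of $\Omega_2(H)$ in $F$, so $\Omega_2(F)^p$ is the image of $\Omega_2(H)^p$, which is exactly $E$ by construction.

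An alternative, and perhaps cleaner, route is purely cohomological: combine Proposition~\ref{pr:omegachar} with a direct comparison. If $G = H/\Omega_1(H)$ with $H$ $p$-central, let $A$ be a maximal elementary abelian subgroup of $G$; lift it to an appropriate subgroup of $H$ and use the Bockstein/LHS-spectral-sequence bookkeeping (exactly as in the proof of Theorem~\ref{th:sub}, where the survival of $B(\Omega_1(N))$ to $E_\infty$ is established) to show that every class in $B(A)$ is a restriction from $H^2(G)$; then Proposition~\ref{pr:omegachar} gives $\Omega$-extendibility directly. This has the advantage of reusing machinery already developed, but requires care in matching up the maximal elementary abelian of $G$ with the structure of $H$.

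The main obstacle I anticipate is the verification that $\Omega_1(F) = E$ in the group-theoretic approach, i.e. controlling which elements of $H$ become $p$-torsion modulo $\Omega_2(H)^p$; this is the kind of statement that is true for $p$ odd precisely because of the good behaviour of $p$-th power maps (regularity) in $p$-central groups, and fails at $p=2$ (consistent with the remark in the excerpt that one prefers $\Omega$-extendibility over $\Omega$EP at the prime $2$). I would handle it by reducing to the powerful or $p$-central structure theory already cited (the results of Khukhro and of Mann--Lubotzky used in the proof of Theorem~\ref{th:sub}), or alternatively sidestep it entirely by taking the cohomological route above.
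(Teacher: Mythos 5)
Your forward direction is fine, but the group-theoretic construction you propose for the converse uses the wrong quotient, and no regular-power argument will repair it. Setting $F = H/\Omega_2(H)^p$ kills exactly the part of $\Omega_1(H)$ that you need to keep. Concretely: the image of $\Omega_2(H)^p$ in $H/\Omega_2(H)^p$ is trivial, so your verification of $\Omega_2(F)^p = E$ in fact shows $\Omega_2(F)^p = 1$ (granting your identification of $\Omega_2(F)$, which is itself not automatic); this equals your $E = \Omega_1(H)/\Omega_2(H)^p$ only when $\Omega_1(H) = \Omega_2(H)^p$, which is precisely the nontrivial condition the proof must arrange, so the argument is circular. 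The simplest example already breaks the construction: take $H = \mathbb Z/p^2$, so that $G = H/\Omega_1(H) \cong \mathbb Z/p$. Then $\Omega_2(H)^p = \Omega_1(H)$, your $F$ is $\mathbb Z/p$ and your $E$ is trivial, while $\Omega_1(F) = F \neq E$; the correct witness here is of course $F = \mathbb Z/p^2$ itself, which your construction destroys.

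The paper's proof goes the opposite way: it keeps $E = \Omega_1(H)$ and modifies $H$ only by a complement of $\Omega_2(H)^p$ inside $\Omega_1(H)$. Since $G$ is $p$-central (by Lemma~\ref{le:pcent}), $\Omega_2(H)/\Omega_1(H)$ is elementary abelian, and for $p$ odd the $p$-power map $\Omega_2(H)/\Omega_1(H) \rightarrow \Omega_1(H)$ is a homomorphism \cite[6.14]{khukhro} with trivial kernel; its image $\Omega_2(H)^p$ is therefore a direct factor of the elementary abelian group $\Omega_1(H)$, say $\Omega_1(H) = \Omega_2(H)^p \times X$. Replacing $H$ by $H/X$ makes the $p$-power map onto, so $\Omega_1(H/X) = \Omega_2(H/X)^p$ and the central extension $1 \rightarrow \Omega_1(H/X) \rightarrow H/X \rightarrow G \rightarrow 1$ witnesses $\Omega$-extendibility. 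Your alternative cohomological route via Proposition~\ref{pr:omegachar} is too sketchy to assess and is not what the paper does; once the quotient is taken correctly it is also unnecessary.
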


When $p=2$ the status of this lemma is not clear.

\begin{proof}
	Clearly if $G$ is $\Omega$-extendible then it has the $\Omega$-extension property. For the converse, let $H$ be a $p$-central group such that $G = H/\Omega_1(H)$.
Either by definition or by the previous lemma, $G$ is $p$-central, so $\Omega_2(H)/\Omega_1(H)$ is elementary abelian. Since $p$ is odd, the $p$-power map  $\Omega_2(H)/\Omega_1(H) \rightarrow \Omega_1(H)$ is a homomorphism, by \cite[6.14]{khukhro}; its kernel is clearly trivial. If this map is not onto, write $\Omega_1(H)= \Omega_2(H)^p \times X$ and replace $H$ by $H/X$ to obtain a map that is onto.
	\end{proof} 

\begin{proof}[Proof of Proposition~\ref{pr:omegachar}.] Suppose that every element of $B(A)$ is the restriction of an element of $H^2(G)$. Consider the action of $N_G(A)$ on $H^2(A)$ via conjugation. This must fix $B(A)$ since the latter is generated by restrictions. But $B(A)$ is canonically isomorphic to $H^1(A)$ by the Bockstein map, so $N_G(A)$ acts trivially on $H^1(A)$ and hence centralises $A$. A straightforward argument as in \cite[2.3]{weigel} now shows that $A$ is central and hence $G$ is $p$-central. Let $n$ denote the rank of $A=\Omega_1(G)$.

Let $z_1, \ldots , z_n \in H^2(G)$ be elements that restrict to a basis of $B(A)$. They combine to define an element $\underbar{z} \in H^2(G;(\mathbb Z/p)^n)$ and hence a central extension of groups $1 \rightarrow (\mathbb Z/p)^n \rightarrow F \rightarrow G \rightarrow 1$. Because the $z_i$ restrict to a basis of $B(A)$, we have $\Omega_2(F) \cong (\mathbb Z/p^2)^n$, so this extension certainly satisfies the definition of $\Omega$-extendible.

Conversely, suppose that $G$ satisfies the definition of $\Omega$-extendible, so in particular $A=\Omega_1(G)$. After choosing an isomorphism $E \cong (\mathbb Z/p)^n$ we obtain elements $z_1, \ldots z_n \in H^2(G)$ that describe the extension in the definition. For the moment, assume that $G$ cannot be expressed non-trivially as a product $K \times X$ with $X$ elementary abelian; since $G$ is $p$-central, this is equivalent to $\Omega_1(G) \leq \Phi(G)$. By \cite[1.3]{ms}, the image of restriction $H^2(G) \rightarrow H^2(\Omega_1(G))$ lies in $B(\Omega_1(G))$. Thus $\Omega_2(F)$ is abelian and it is easy to check that the only possibility that satisfies the property $\Omega_2(F)^2=\Omega_1(F)$ is when $\Omega_2(F) \cong (\mathbb Z/p^2)^n$ and thus the elements $\res^G_{\Omega_1(G)}z_i$ form a basis for $B(\Omega_1(G))$, as required. 

If $G= K \times X$, we may assume that $K$ cannot be decomposed further. Let $F'$ be the inverse image of $K$ in $F$ and write $E=(F')^p \times Y$. The extension $1 \rightarrow E/Y \rightarrow F'/Y \rightarrow K \rightarrow 1$ shows that $K$ satisfies the definition of $\Omega$-extendible and so the discussion above produces elements $z'_1, \ldots , z'_m \in H^2(K)$, which can be inflated to $z_1, \ldots , z_m \in H^2(G)$. We obtain $z_{m+1}, \ldots , z_n$  by taking a basis for $B(X)$ and inflating. 
\end{proof}

 Notice that this proof shows that, in the definition of $\Omega$-extendible, we can always assume that $\Omega_2(F) \cong (\mathbb Z/p^2)^n$.
 
\begin{proof}[Proof of Theorem~\ref{th:cohomchar}.] All that remains is to show that if $G$ satisfies condition (3) then it is $\Omega$-extendible. Let $A \leq G$ be a maximal elementary abelian subgroup. Since $H^2(G)$ contains a collection of parameters for $H^*(G)$, its restriction to $H^2(A)$ must contain a collection of parameters for $H^*(A)$. Since $p$ is odd, this means that the restrictions of the parameters span $H^2(A)/(H^1(A))^2$, so the action of $N_G(A)$ on this by conjugation must be trivial. But $H^2(A)/(H^1(A))^2$ is canonically isomorphic via the Bockstein to $H^1(A)$, so the same argument as in the previous proof shows that $G$ is $p$-central.

Let $z_1, \ldots , z_n \in H^2(G)$ be elements that restrict to a basis for $H^2(\Omega_1(G))/(H^1(\Omega_1(G)))^2$. These combine to define an extension that shows that $G$ is indeed $\Omega$-extendible, by \cite[2.2]{ms}. 
\end{proof}

The last sentence of the proof of Theorem~5.1 (Carlson's Conjecture for $p=2$) in \cite{carlson} is unclear in its use of Theorem~3.3 of that work. Here we present a proof of an amended version of Theorem~3.3  that can be used instead. It is a version of \cite[4.2]{dgg2} for the prime 2.

\begin{theorem} Let $f$, $n$ and $r$ be non-negative integers and suppose that 
	\[ 1 \rightarrow N \rightarrow G \rightarrow Q \rightarrow 1 \]
	is an extension of finite $p$-groups such that
	\begin{enumerate}
		\item
		$|N| \leq n$ and
		\item
		$Q$ has an abelian normal subgroup $A$ with $|Q:A| \leq f$ and $\rank (A) \leq r$.
		\end{enumerate}
	Then the ring $H^*(G)$ is determined up to a finite number of possibilities by $H^*(Q)$.
	\end{theorem}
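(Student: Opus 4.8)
The plan is to deduce the statement directly from Theorem~\ref{th:main}. The key point is that, under hypotheses (1) and (2), $G$ is itself a finite $p$-group whose rank is bounded in terms of $p$, $n$, $r$ and $f$ alone. Granting this, we are done: Theorem~\ref{th:main} gives that $H^*(G)$ lies in one of finitely many graded-isomorphism classes (a number depending only on $p$, $n$, $r$, $f$), and in particular $H^*(G)$ is determined up to finitely many possibilities once $H^*(Q)$ is known.

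To establish the rank bound I would apply twice the fact that rank is subadditive along short exact sequences: if $1 \to A_1 \to B \to A_2 \to 1$ then $\rank B \le \rank A_1 + \rank A_2$, since for $H \le B$ one has $d(H) \le d(H\cap A_1) + d(HA_1/A_1) \le \rank A_1 + \rank A_2$. Applying this to $1 \to A \to Q \to Q/A \to 1$, with $\rank A \le r$ and $\rank(Q/A) \le \lfloor\log_p f\rfloor$ (because $|Q/A| \le f$), gives $\rank Q \le r + \lfloor\log_p f\rfloor$. Applying it again to $1 \to N \to G \to Q \to 1$, with $\rank N \le \lfloor\log_p n\rfloor$ (because $|N| \le n$), gives $\rank G \le \lfloor\log_p n\rfloor + r + \lfloor\log_p f\rfloor$. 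There is no real obstacle here: the weight of the statement is carried entirely by Theorem~\ref{th:main}, and only this elementary rank estimate needs to be checked.

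Should a proof avoiding the full strength of Theorem~\ref{th:main} be wanted, the ingredients of Sections~\ref{se:first}--\ref{sec:para} suffice. Let $\tilde A \le G$ be the preimage of $A$; then $\tilde A \unlhd G$ with $[G:\tilde A] = [Q:A] \le f$, and the computation above shows $\rank \tilde A \le \lfloor\log_p n\rfloor + r$. Theorem~\ref{th:sub} then produces a characteristic subgroup $N' \le \tilde A$ that is powerful and $\Omega$-extendible, of bounded rank, and of index in $\tilde A$ bounded in terms of $p$ and $r$; being characteristic in the normal subgroup $\tilde A$, $N'$ is normal in $G$, and $[G:N'] = [G:\tilde A]\,[\tilde A:N']$ is bounded. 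One then runs the argument of Section~\ref{sec:para} with $N'$ in the role of $N$: Theorem~\ref{th:cohomchar} controls $H^*(N')$, the Dickson polynomials together with the Evens norm give a collection of $G$-parameters for $H^*(N')$ of bounded degree, these are combined via Lemma~\ref{le:combine} with a bounded collection of bounded-degree parameters for the bounded-order quotient $G/N'$, and Lemma~\ref{le:dimbound} with Proposition~\ref{pr:par} conclude. The direct reduction is cleaner, though.
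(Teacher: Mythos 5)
Your argument is correct: rank is subadditive along extensions of $p$-groups (the paper itself invokes this in the proof of Proposition~\ref{pr:rank}), so $\rank G \le \lfloor\log_p n\rfloor + r + \lfloor\log_p f\rfloor$, and Theorem~\ref{th:main}, applied for each rank value up to this bound, puts $H^*(G)$ in a finite list depending only on $p$, $n$, $r$, $f$ --- an absolute finiteness statement that trivially implies the relative one claimed. It is, however, a genuinely different route from the paper's. The paper argues by induction on $|N|$: it peels off a normal subgroup $\tilde N < N$ of index $p$, checks that the induced extension $1 \to C_p \to \tilde G \to Q \to 1$ still has an abelian normal subgroup of controlled index and rank (the preimage of $A^p$), invokes Carlson's own results [3.1, 3.2] of \cite{carlson} to limit the possibilities for $H^*(\tilde G)$, and then applies the inductive hypothesis to $1 \to \tilde N \to G \to \tilde G \to 1$. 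The reason for that choice is contextual: this theorem is offered precisely as a repair of Theorem~3.3 of \cite{carlson}, so that Carlson's self-contained proof of his conjecture for $p=2$ can be salvaged. Your deduction from Theorem~\ref{th:main} is non-circular (nothing in the proof of Theorem~\ref{th:main} depends on this statement) and perfectly valid as mathematics, but it renders the patch useless for its intended purpose, since Carlson's argument would then be subsumed by, rather than independent of, the main theorem of this paper; your fallback sketch does not escape this either, since it still routes through Proposition~\ref{pr:par} and hence Benson's Regularity Theorem. So: correct, and in fact yielding a stronger (absolute rather than relative) conclusion, but if the goal is to mend Carlson's proof one should give the elementary inductive argument instead.
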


\begin{proof}
	We use induction on $n$ (for all $G$, $f$, $r$). The statement is trivially true for $n=0$; otherwise pick some $\tilde{N} <N$ of index $p$ that is normal in $G$ and set $\tilde{G}=G/\tilde{N}$. Consider the induced extension 
	\[ 1 \rightarrow C_p \rightarrow \tilde{G} \xrightarrow{\pi} Q \rightarrow 1 \]
	and set $\tilde{A} = \pi^{-1}(A)$, $B=\pi^{-1}(A^p)$. From the standard formula $[ab,c]=[a,c]^b[b,c]$ \cite[1.11]{khukhro} we readily obtain $[a^p,b]=[a,b]^p=1$ for $a,b \in \tilde{A}$. Since any element of $B$ is of the form $a^pc$, $a \in \tilde{A}$, $c \in C_p$, it follows that $B$ is abelian of rank at most $r+1$. 
	
	Now $| \tilde{A} : B | = |A:A^p| \leq p^r$ and $|G:\tilde{A}| = |Q:A| \leq f$, so $|\tilde{G}:B| \leq p^rf$. Apply \cite[3.1, 3.2]{carlson} to see that there are only finitely many possibilities for $H^*(\tilde{G})$. Consider the extension
	\[ 1 \rightarrow \tilde{N} \rightarrow G \rightarrow \tilde{G} \rightarrow 1 \]
	and apply the induction hypothesis, with $B$ as normal subgroup, to deduce that there are only finitely many possibilities for $H^*(G)$.
	\end{proof}



\begin{thebibliography}{99}
\bibitem{Benson2}
D.J. Benson, \emph{Representations and Cohomology II}, Cambridge
Studies in Advanced Mathematics \textbf{31} (1991), Cambridge
Univ.\ Press, Cambridge.

\bibitem{Benson}
D.J. Benson, \emph{Polynomial Invariants of Finite Groups}, London Math. Soc. Lecture Notes \textbf{190}, Cambridge
	Univ. Press, Cambridge 1993.
	
\bibitem{bp}
	W. Browder, J. Pakianathan, \emph{Cohomology of uniformly powerful $p$-groups},  Trans. Amer. Math. Soc. 
	\textbf{352} (2000), 2659--2688.  
	
\bibitem{carlson}
J.F. Carlson, \emph{Coclass and cohomology}, J. Pure Applied Algebra \textbf{200} (2005), 251--266.


\bibitem{dgg}
A. D\'{i}az Ramos, O. Garaialde Oca\~{n}a, J. Gonz\'{a}lez-S\'{a}nchez, \emph{Cohomology of uniserial $p$-adic space groups}, Trans. Amer. Math. Soc. \textbf{369} (2017), 6725--6750.

\bibitem{dgg2}
A. D\'{i}az Ramos, O. Garaialde Oca\~{n}a, J. Gonz\'{a}lez-S\'{a}nchez, \emph{Cohomology of $p$-groups of nilpotency class smaller than $p$},  J. Group Theory \textbf{21} (2018), 337--350.

 \bibitem{def}
H. Dietrich, B. Eick, D. Feichtenschlager, \emph{Investigating $p$-groups by coclass with GAP}, In Computational Group Theory and the Theory of Groups, Kappe, Magidin, Morse eds. Contemp. Math. \textbf{470}, 45--61, AMS, Providence, RI 2008.

\bibitem{DDMS} J.D. Dixon, M.P.F. du Sautoy, A. Mann, D. Segal, \emph{Analytic Pro-$p$ Groups}, 2nd edition, Cambridge Univ. Press, Cambridge 1999.

\bibitem{eg}
B. Eick, D.J. Green, \emph{The Quillen categories of $p$-groups and coclass theory}, Israel J. Math. \textbf{206} (2015), 183--212.

\bibitem{eisenbud} D. Eisenbud, \emph{Commutative Algebra with a View toward Algebraic Geometry}, Graduate Texts in Mathematics \textbf{150}, Springer 1995.

\bibitem{gk}
D.J. Green, S.A. King, \emph{The computation of the cohomology rings of all groups of order 128}, J. Algebra \textbf{325} (2011), 352--363.

\bibitem{GT} R.M. Guralnick, P.H. Tiep, \emph{Sectional rank and cohomology},  J. Algebra (2019), doi.org/10.1016/j.jalgebra.2019.04.023.

\bibitem{huppert}
B. Huppert, \emph{Endliche Gruppen}, Springer-Verlag 1967.

\bibitem{khukhro}
E.I. Khukhro, \emph{$p$-Automorphisms of Finite $p$-Groups}, LMS Lecture Notes \textbf{246}, Cambridge Univ.\ Press, Cambridge 1998.

\bibitem{LG}
C.R. Leedham-Green, S. McKay. \emph{The Structure of Groups of Prime Power Order}, Oxford Univ. Press, Oxford 2002.

\bibitem{LM} A. Lubotzky, A. Mann, \emph{Powerful $p$-groups. I: finite groups}, J. Algebra \textbf{105} (1987) 484--505.
 
\bibitem{ms}
P.A. Minh, P. Symonds, \emph{The cohomology of a pro-$p$ group with a powerfully embedded subgroup}, J. Pure Applied Algebra \textbf{189} (2004), 221--246.

\bibitem{quillen}
D. Quillen, \emph{The spectrum of an equivariant cohomology ring: I, II}, Ann. Math. \textbf{94} (1971) 549--602.
	
\bibitem{Sy1}
P. Symonds, \emph{On the Castelnuovo-Mumford regularity of the cohomology ring of a group},
J. Amer. Math. Soc. \textbf{23} (2010), 1159--1173.

\bibitem{Sy2}
P. Symonds, \emph{On the Castelnuovo-Mumford regularity of rings of polynomial invariants},
Ann. Math. \textbf{174} (2011), 499--517.

\bibitem{weigel}
T. Weigel, \emph{$p$-central groups and Poincar\'{e} duality},  Trans. Amer. Math. Soc.
\textbf{352} (2000), 4143--4154.


\end{thebibliography}
\end{document}